\title{Induced 2-degenerate Subgraphs of Triangle-free Planar Graphs}
\author{
  Zden\v{e}k Dvo\v{r}\'ak%
  \thanks{Charles University, Prague, Czech Republic,
  {\tt rakdver@iuuk.mff.cuni.cz}.  Supported by project 17-04611S (Ramsey-like aspects of graph coloring) of Czech Science Foundation.}
  \and
  Tom Kelly%
  \thanks{Department of Combinatorics and Optimization, University of Waterloo, 200 University Ave West, Waterloo, Ontario, Canada N2L 3G1, \texttt{t9kelly@uwaterloo.ca}}
}
\date{February 20, 2018}
\begin{document}
\maketitle

\begin{abstract}
  A graph is \textit{$k$-degenerate} if every subgraph has minimum degree at most $k$.  We provide lower bounds on the size of a maximum induced 2-degenerate subgraph in a triangle-free planar graph.  We denote the size of a maximum induced 2-degenerate subgraph of a graph $G$ by $\degen_2(G)$.  We prove that if $G$ is a connected triangle-free planar graph with $n$ vertices and $m$ edges, then $\degen_2(G) \geq \frac{6n - m - 1}{5}$.  By Euler's Formula, this implies $\degen_2(G) \geq \frac{4}{5}n$.  We also prove that if $G$ is a triangle-free planar graph on $n$ vertices with at most $n_3$ vertices of degree at most three, then $\degen_2(G) \geq \frac{7}{8}n - \bigOconstant n_3$.  
\end{abstract}

\section{Introduction}

A graph is \textit{$k$}-degenerate if every nonempty subgraph has a vertex of degree at most $k$.  The \textit{degeneracy} of a graph is the smallest $k$ for which it is $k$-degenerate, and it is one less than the \textit{coloring number}.  It is well-known that planar graphs are 5-degenerate and that triangle-free planar graphs are 3-degenerate.  The problem of bounding the size of an induced subgraph of smaller degeneracy has attracted a lot of attention.  In this paper we are interested in lower bounding the size of maximum induced 2-degenerate subgraphs in triangle-free planar graphs.  In particular, we conjecture the following.

\begin{conj}\label{main-conj}
Every triangle-free planar graph contains an induced 2-degenerate subgraph on at least $\frac{7}{8}$ of its vertices.
\end{conj}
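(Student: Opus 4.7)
The plan is to attack the conjecture via a minimum counterexample combined with a discharging argument tailored to preserve the $\frac{7}{8}$-ratio. Suppose for contradiction that $G$ is a triangle-free planar graph with the fewest vertices for which $\degen_2(G) < \frac{7}{8}|V(G)|$; fix a planar embedding of $G$. The first step is to establish a catalogue of \emph{reducible configurations}: small subgraphs $X \subseteq G$ whose presence allows us to take a near-optimal induced 2-degenerate subgraph $H$ of a strictly smaller triangle-free planar graph $G'$ (obtained by deleting or contracting a portion of $X$) and extend or rearrange it inside $X$ to produce an induced 2-degenerate subgraph of $G$ with at least $\frac{7}{8}|V(G)|$ vertices, contradicting minimality.

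Among the reducible configurations I would first try to rule out are: (i) a vertex of degree at most $2$ (trivially extendable, since any 2-degenerate $H$ remains 2-degenerate after appending such a vertex); (ii) a vertex $v$ of degree $3$ sharing a short face with another vertex of degree at most $3$; (iii) two adjacent degree-$3$ vertices with a common second neighbor; and (iv) short light faces more generally. For each configuration, the strategy is to define a local modification $G \to G'$ together with a precise lifting of a 2-degenerate subgraph of $G'$ back into $G$, and to verify that the deficit $\frac{7}{8}|V(G)| - \degen_2(G)$ does not increase under the modification.

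With all such configurations forbidden, the resulting graph $G$ has tightly restricted local structure near its degree-$3$ vertices, and we run a discharging argument based on Euler's formula. Starting from the initial charges $d(v) - 4$ on vertices and $|f| - 4$ on faces, which sum to $-8$ for triangle-free planar graphs, we redistribute charge so that long faces and high-degree vertices absorb the deficit of low-degree vertices. The goal is a final charge assignment in which every vertex and face has nonnegative charge, contradicting the negative total and completing the proof.

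The main obstacle, and the reason the conjecture resists proof despite the authors' $\frac{7}{8}n - \bigOconstant n_3$ theorem, is precisely the handling of degree-$3$ vertices. The cube $Q_3$ is a tight example: it is $3$-regular, bipartite, planar, and its maximum induced 2-degenerate subgraph has exactly $7 = \frac{7}{8}|V(Q_3)|$ vertices, so any reducible configuration must be calibrated carefully so that it cannot appear as a sub-configuration of $Q_3$. Moreover, the slack $\bigOconstant n_3$ in the authors' second theorem degrades completely for cubic triangle-free planar graphs (where $n_3 = n$), signalling that cubic-like regions are the hard case. A successful argument will therefore likely need to extract exactly the right fraction from each degree-$3$ vertex with no constant-factor loss, possibly by identifying the desired 2-degenerate subgraph via a global matching- or flow-based procedure on the degree-$3$ substructure, rather than relying purely on local reductions.
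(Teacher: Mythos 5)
This statement is Conjecture~\ref{main-conj}, which the paper explicitly leaves open: the authors prove only the weaker Theorems~\ref{four-fifths-thm} and \ref{bounded-degree-three}, and they state outright that they do not see their method as a viable route to the full conjecture. So there is no proof in the paper to compare against, and your proposal must be judged on its own terms as a purported proof of an open problem.

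Judged that way, it has a genuine gap: it is a strategy template, not an argument. The only configuration you actually verify to be reducible is a vertex of degree at most two (and that one does work: deleting it, applying minimality, and re-inserting it gains $1 > \frac{7}{8}$). Configurations (ii)--(iv) are named but never shown to be reducible --- no modification $G\to G'$ is specified, no lifting is verified to preserve 2-degeneracy, and no accounting of the $\frac{7}{8}$-deficit is carried out. Likewise, no discharging rules are given, so the claim that every vertex and face ends with nonnegative charge is unsupported. Worse, you correctly identify the central obstruction --- the cube is tight, so every reduction must preserve the ratio exactly, and cubic regions admit no slack --- and then explicitly defer its resolution (``a successful argument will therefore likely need to\ldots''). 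That deferred step is not a detail; it is the entire difficulty. The paper's own machinery illustrates why: even for the weaker $\frac{4}{5}$ bound, the tightness of the cube forces the introduction of the correction term $\lambda$ counting ``difficult components'' in Theorem~\ref{4/5 bound}, together with delicate lemmas (e.g.\ Lemma~\ref{edges leaving cube}) controlling how cube-blocks attach to the rest of the graph. An analogous, as-yet-unknown correction term and structural analysis would be needed at the $\frac{7}{8}$ level, and your proposal contains no candidate for it. As written, this does not constitute a proof of the conjecture.
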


Conjecture~\ref{main-conj}, if true, would be tight for the \textit{cube}, which is the unique 3-regular triangle-free planar graph on 8 vertices (see Figure~\ref{cube figure}).
For an infinite class of tight graphs, if $G$ is a planar triangle-free graph whose vertex set can be partitioned into parts each inducing a subgraph isomorphic to the cube, then
$G$ does not contain an induced 2-degenerate subgraph on more than $\tfrac{7}{8}|V(G)|$ vertices.

\begin{figure}
  \centering
  \begin{tikzpicture}
  \tikzstyle{every node}=[fill, draw, circle, scale=.5, font=\huge];
  \node[label=45:$u_1$] (u1) at (0, 0) {};
  \node[label=-45:$u_2$] (u2) at (1, 0) {};
  \node[label=-135:$u_3$] (u3) at (1, -1) {};
  \node[label=135:$u_4$] (u4) at (0, -1) {};
  \draw (u1) -- (u2) -- (u3) -- (u4) -- (u1);
  \node[label=-45+180:$v_1$] (v1) at ($(u1) + (135:1)$) {};
  \draw (u1) -- (v1);
  \node[label=-135+180:$v_2$] (v2) at ($(u2) + (-135+180:1)$) {};
  \draw (u2) -- (v2);
  \node[label=135+180:$v_3$] (v3) at ($(u3) + (135+180:1)$) {};
  \draw (u3) -- (v3);
  \node[label=45+180:$v_4$] (v4) at ($(u4) + (45+180:1)$) {};
  \draw (u4) -- (v4);
  \draw (v1) -- (v2) -- (v3) -- (v4) -- (v1);
\end{tikzpicture}
  \caption{The cube.}
  \label{cube figure}
\end{figure}
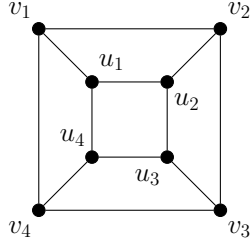

Towards Conjecture~\ref{main-conj}, we prove the following weaker bound.

\begin{thm}\label{four-fifths-thm}
  Every triangle-free planar graph contains an induced 2-degenerate subgraph on at least $\frac{4}{5}$ of its vertices.
\end{thm}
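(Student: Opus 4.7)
The plan is to prove the stronger bound stated in the abstract: every connected triangle-free planar graph $G$ satisfies $\degen_2(G)\geq (6n-m-1)/5$. Theorem~\ref{four-fifths-thm} will then follow from Euler's formula, which yields $m\leq 2n-4$ for a triangle-free planar graph, so $(6n-m-1)/5 \geq (4n+3)/5 \geq \tfrac{4}{5}n$; a disconnected graph is handled component by component, since an induced 2-degenerate subgraph of $G$ is a disjoint union of such subgraphs of the components.

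I would prove the sharpened inequality by induction on $n+m$, first strengthening the statement to $\degen_2(G)\geq (6n-m-c)/5$ with $c$ the number of components, so that the reductions may safely disconnect $G$. When $\delta(G)\leq 2$, the induction runs smoothly via vertex deletion: a vertex $v$ of degree at most $1$ is removed and reinserted at the front of the 2-degeneracy ordering of $G-v$; a degree-$2$ non-cut vertex is handled identically, with slack; and a degree-$2$ cut vertex $v$ splits $G-v$ into two components $G_1, G_2$ with $v$'s neighbors in distinct parts, so one applies induction to each and reinserts $v$ at the end of the concatenated ordering, where its backward degree is at most $2$. The arithmetic propagates the inequality exactly in each subcase.

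Since triangle-free planar graphs are $3$-degenerate, the only remaining case is $\delta(G)=3$, which is the heart of the proof. A naive reduction---delete a single degree-$3$ vertex $v$ and apply induction to $G-v$---falls short: a maximum 2-degenerate induced subgraph $H'$ of $G-v$ may contain all three neighbors of $v$, yielding only $(6n-m-4)/5$, which is $3/5$ below target. The remedy is to find a \emph{reducible configuration}: a small connected subgraph $F\subseteq G$ whose removal permits reinserting all of $V(F)$ into a 2-degeneracy ordering of the reduced graph with the correct book-keeping. Natural candidates include a pair of adjacent degree-$3$ vertices, a degree-$3$ vertex lying on a short face whose other incident vertices have controlled degree, and $4$-faces with a specific degree profile.

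The main obstacle is thus the structural analysis at $\delta(G)=3$: one must exhibit, in every such triangle-free planar graph that is not already handled, at least one reducible configuration from a carefully chosen list, and verify that each configuration on the list indeed supports the desired reinsertion. Because the cube is a tight example already under the stronger bound, the list must be chosen with care---no all-degree-$3$ $4$-face can itself be reducible, since the cube consists precisely of such faces---which forces the argument to appeal either to faces of length greater than $4$ or to the presence of a vertex of degree at least $4$ adjacent to degree-$3$ vertices.
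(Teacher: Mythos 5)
There is a genuine gap, in two respects. First, your proposal correctly identifies where the difficulty lies---the case $\delta(G)=3$, where one must exhibit a reducible configuration---but it stops there: no configuration is actually specified, no verification of reducibility is given, and no argument is offered that such a configuration must exist. The paper's configuration is \emph{a vertex of degree three not contained in a separating cycle of length four or five}; showing it is reducible takes a sequence of claims (no neighbor of degree $\ge 5$ or $=3$, no second degree-three vertex on a common $4$-cycle, every incident edge on a $4$-cycle), and showing it exists in every triangle-free plane graph of minimum degree three is a separate discharging argument carried out inside a minimal separating cycle of length at most five. All of this is the actual content of the proof and is absent from, indeed explicitly deferred by, your outline.

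Second, and more concretely, the strengthening you propose---$\degen_2(G)\ge(6n-m-c)/5$ with $c$ the number of components---is the wrong induction invariant, and the natural inductive step provably does not close with it. The paper instead proves $\degen_2(G)\ge(6n-m-\lambda)/5$ where $\lambda$ counts only \emph{difficult} components (connected graphs all of whose blocks are vertices, edges, or vertex-disjoint cubes). The difference matters: for a non-difficult connected graph the paper's hypothesis gives the strictly stronger bound $(6n-m)/5$, and this extra $\tfrac15$ per non-difficult component is exactly what powers the induction. To see your version fail, consider excluding a cube subgraph $X$ with five edges leaving (the crux reduction, Lemma~\ref{edges leaving cube}): one can always re-add $A=7$ of its $8$ vertices, and your bookkeeping requires $5A\ge 6|X|-e_X+c'-1$, i.e.\ $c'\le 5$, which is automatically satisfied since $G-X$ has at most five components---no contradiction. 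The paper's bookkeeping instead forces all five components of $G-X$ to be difficult and attached by single edges, whence $G$ itself is difficult, contradicting an earlier lemma. Since the cube is the tight example and such near-cube attachments genuinely arise, you cannot avoid refining the correction term along these lines; the component count alone will not do.
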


We believe the argument we use can be strengthened to give a bound $\frac{5}{6}$, however the technical issues are substantial
and since we do not see this as a viable way to prove Conjecture~\ref{main-conj} in full, we prefer to present the easier argument
giving the bound $\frac{4}{5}$.

Triangle-free planar graphs have average degree less than $4$, and thus they must contain some vertices of degree at most three.
Nevertheless, they may contain only a small number of such vertices---there exist arbitrarily large triangle-free planar graphs
of minimum degree three that contain only $8$ vertices of degree three.  It is natural to believe that $2$-degenerate induced subgraphs
are harder to find in graphs with larger vertex degrees, and thus one might wonder whether a counterexample to Conjecture~\ref{main-conj}
could not be found among planar triangle-free graphs with almost all vertices of degree at least four.  This is a false intuition---such
graphs are very close to being $4$-regular grids, and their regular structure makes it possible to find large $2$-degenerate induced subgraphs.
To support this counterargument, we prove the following approximate form of Conjecture~\ref{main-conj} for graphs with small numbers
of vertices of degree at most three.

\begin{thm}\label{bounded-degree-three}
  If $G$ is a triangle-free planar graph on $n$ vertices with $n_3$ vertices of degree at most three, then $G$ contains an induced 2-degenerate subgraph on at least $\frac{7}{8}n - \bigOconstant n_3$ vertices.
\end{thm}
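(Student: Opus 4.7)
My plan is to reduce, via Euler's formula, to the situation where $G$ is close to a $4$-regular plane quadrangulation, and then to find a large induced $2$-degenerate subgraph via a discharging argument.

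For the structural reduction, Euler's formula applied to the triangle-free planar graph $G$ yields
\[
  \sum_v (d(v) - 4) + \sum_f (\ell(f) - 4) = -8,
\]
where $\ell(f)$ denotes the length of face $f$. Triangle-freeness forces $\ell(f) \ge 4$ for every face, so
\[
  \sum_{d(v) \ge 5} (d(v) - 4) + \sum_{\ell(f) \ge 5} (\ell(f) - 4) \le 4 n_3 - 8,
\]
and consequently all but $O(n_3)$ vertices have degree exactly $4$ and all but $O(n_3)$ faces are $4$-cycles. Modulo an $O(n_3)$-sized set of ``irregular'' vertices and faces, $G$ is essentially a $4$-regular quadrangulation.

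For the main construction, I plan to build the induced $2$-degenerate subgraph by greedy peeling: initialize $H := G$ and, while $H$ contains any subgraph of minimum degree at least $3$, delete a carefully chosen vertex from such a subgraph. The final $H$ is $2$-degenerate by construction, and letting $X := V(G) \setminus V(H)$, the task reduces to bounding $|X| \le n/8 + O(n_3)$. I would establish this bound via a discharging argument motivated by the tight examples (disjoint unions of cubes, in which exactly one vertex must be removed from each $8$-vertex cube). Assigning initial charges $\mu(v) := d(v) - 4$ and $\mu(f) := \ell(f) - 4$, which sum to $-8$, I would design local redistribution rules so that each vertex deleted into $X$ is certified by a local ``cube-like'' dense configuration that consumes enough charge to account for its removal, while the surplus from $4$-regular vertices and $4$-faces outside these configurations is controlled to yield the constant $1/8$; the $O(n_3)$ slack absorbs the contribution of the irregular vertices and faces.

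The main obstacle will be designing the discharging rules so as to yield the tight constant $7/8$ for arbitrary $4$-regular quadrangulation-like structures, not merely literal cubes. In particular, I expect to need a delicate local analysis of how the greedy peeling propagates through overlapping $4$-faces, ensuring that successive deletions can be charged to (nearly) disjoint $8$-vertex configurations. A secondary challenge is controlling the boundary between the $4$-regular bulk of $G$ and the $O(n_3)$-sized irregular region, since deleting irregular vertices may create new short faces or degree-$3$ vertices that perturb the subsequent analysis; I plan to absorb these effects into the $O(n_3)$ error by enlarging the initial deletion near defects by a constant factor.
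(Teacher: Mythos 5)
There is a genuine gap at the heart of your plan: the step ``bound $|X| \le n/8 + O(n_3)$ via discharging'' is the entire content of the theorem, and the mechanism you propose for it cannot work as described. With initial charges $\deg(v)-4$ and $|f|-4$, the total charge of a triangle-free plane graph is exactly $-8$; degree-four vertices and quadrilateral faces carry zero charge, so in the ``$4$-regular quadrangulation-like'' bulk there is no surplus whatsoever to redistribute. You therefore cannot certify $\Theta(n)$ deletions by paying for each out of locally available charge --- the total budget is an absolute constant, not a quantity proportional to $n$. Relatedly, the tight example you model your rules on is misleading for this particular statement: the cube is $3$-regular, so in a disjoint union of cubes every vertex is counted in $n_3$ and the claimed bound is vacuous there. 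In the only regime where the theorem has content ($n_3=o(n)$), your own Euler-formula reduction shows the graph is locally a patch of the square grid, which contains no cube-like dense configurations and is in fact already $2$-degenerate as an induced subgraph; the true obstruction is global (how the grid-like sheets close up around the few faces carrying the low-degree vertices), not local. Finally, ``while $H$ contains a subgraph of minimum degree at least $3$, delete a carefully chosen vertex'' is not a construction until you specify the choice and show each deletion makes progress against all such subgraphs simultaneously; as written it restates the problem.

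For comparison, the paper proves the stronger Theorem~\ref{real-bigO-thm}, with $n_3$ replaced by $\threefaces(G)-2$, where $\threefaces(G)$ is the least number of faces covering all vertices of degree at most three, and argues on a minimal counterexample. The key structural fact (Lemma~\ref{layering-lemma}) is that around each face $f$ of such a covering family, the vertices at ``face-distance'' $k$ from $f$ induce a cycle $C_k$ for every $k\le 9$, because any small separator would split the graph into pieces handled by induction. The constant $7/8$ then emerges from a volume-versus-charge tension rather than a per-vertex charge count: the ten concentric cycles contain at least $184$ vertices, all but roughly $\sum_k n(f,k)$ of which can be added to an inductively obtained $2$-degenerate set, forcing $8\sum_k n(f,k)\ge 249$ in a counterexample, while the discharging argument --- which uses the charge only to control the expansion of the layers near $f$, a constant-size quantity --- forces $\sum_k n(f,k)\le 30$. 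If you want to salvage your approach, you would need to replace ``charge per deleted vertex'' by some analogous volume count around the defective faces; discharging alone can only ever account for the constant total defect $-8$, never for a linear number of deletions.
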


Theorems~\ref{four-fifths-thm} and \ref{bounded-degree-three} are corollaries of more technical results.

\begin{define}
  We say a graph is \textit{difficult} if it is connected, every block is either a vertex, an edge, or isomorphic to the cube, and any two blocks isomorphic to the cube are vertex-disjoint.
\end{define}

We actually prove the following, which easily implies Theorem~\ref{four-fifths-thm} since, by Euler's formula, a triangle-free planar graph $G$ on at least three vertices satisfies $|E(G)| \leq 2|V(G)| - 4$.

\begin{thm}\label{4/5 bound}
  If $G$ is a triangle-free planar graph on $n$ vertices with $m$ edges and $\lambda$ difficult components, then $G$ contains an induced 2-degenerate subgraph on at least
  \begin{equation*}
    \frac{6n - m - \lambda}{5}
  \end{equation*}
  vertices.
\end{thm}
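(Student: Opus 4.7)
The plan is to prove Theorem~\ref{4/5 bound} by induction on $n$. The empty graph trivially satisfies the bound (since $-\lambda/5 \leq 0$), and since $n$, $m$, and $\lambda$ are each additive over connected components, we may assume $G$ is connected. The strategy is to identify, for every triangle-free planar $G$, either a ``peelable'' vertex (of degree at most 2) to include in the induced 2-degenerate subgraph $H$ before recursing, or a more intricate local configuration combining included and deleted vertices that preserves the bound.

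The simplest reduction is to peel a vertex $v$ of degree $d \leq 2$: include $v$ in $H$ (valid since $v$ then has at most 2 neighbors in $H$ and can be placed first in a 2-degenerate ordering) and apply induction to $G - v$. The required inequality $1 + \frac{6(n-1) - (m-d) - \lambda'}{5} \geq \frac{6n - m - \lambda}{5}$ reduces to $d - 1 \geq \lambda' - \lambda$, where $\lambda' = \lambda(G - v)$. A case analysis based on whether $v$ is a cut vertex and on the blocks through $v$ verifies this inequality; the key observation is that a degree-2 vertex either lies in two edge blocks or in a single nontrivial 2-connected block that cannot be a cube (since cube vertices all have degree 3), and a careful count of difficult components in $G - v$ bounds $\lambda' - \lambda$ by $d - 1$.

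Assuming minimum degree at least 3, triangle-freeness and planarity give $m \leq 2n - 4$, so some vertex $v$ has degree exactly 3. If $v$ has a neighbor $u$ of degree at least 5, we include $v$, delete $u$, and apply induction to $G - \{v, u\}$; the inequality reduces to $d(u) - 5 \geq \lambda' - \lambda$, which is comfortable in the generic case. When every neighbor of every degree-3 vertex has degree at most 4, we need more elaborate reductions---for instance, adjacent degree-3 vertices $u, v$ included together with one further vertex deleted (which, because $G$ is triangle-free, is not adjacent to both). Each such reduction yields a linear inequality in the deleted vertex's degree and the change in $\lambda$, and we would enumerate enough local configurations to cover all non-tight cases.

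The main obstacle is the tight case where no local reduction applies. Here I would argue, likely via a discharging argument on the planar embedding---with initial charges $d(v) - 4$ on vertices and $\ell(f) - 4$ on faces summing to $-8$ by Euler's formula---that $G$ must contain a cube block. This aligns with the tightness reflected in the $-\lambda$ correction: a cube block $B$ is handled by including 7 of its 8 vertices in $H$ (retaining an attachment vertex if $B$ is not an entire component), exactly matching the cube's contribution $7 = \frac{6 \cdot 8 - 12 - 1}{5}$ to the bound, with the lost cube vertex absorbed by $\lambda$. Proving that every graph not reducible by the above always contains a cube block is the core technical challenge of the argument.
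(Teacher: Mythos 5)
Your proposal correctly sets up the induction framework (additivity over components, peeling vertices of degree at most two, and the arithmetic of how each local reduction translates into an inequality of the form $A \geq \lambda' - \lambda$), and these preliminary steps do match the paper's Lemmas~\ref{no difficult components}, \ref{induction lemma}, and \ref{min degree three}. But there is a genuine gap at exactly the point you flag yourself: you do not prove that every irreducible graph admits one of your reductions, and your proposed way of closing the tight case does not match what actually works. You write that you would ``enumerate enough local configurations to cover all non-tight cases'' and that the remaining difficulty is to show, by discharging, that any graph escaping your reductions contains a cube block. The paper's mechanism is different and the difference matters. The reducible object is not a cube block but a \emph{vertex of degree three not contained in a separating cycle of length four or five}. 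The separating-cycle condition is essential: in each reduction $G - X$ may acquire difficult components $D$, and the way these are excluded is by observing that a leaf or a small cut of $D$ would force $v$ into a short separating cycle (see the proofs of Claims~\ref{no adjacent 5-vertex}--\ref{3-vertex in two 4-cycles} and Lemma~\ref{no special vertices}). Without that hypothesis, your inequalities such as $\deg(u) - 5 \geq \lambda' - \lambda$ simply fail in the non-generic cases --- $G - X$ can genuinely contain difficult components --- and there is no obvious alternative way to rule them out. A second tool you are missing is Lemma~\ref{edges leaving cube}, which shows a minimal counterexample has no cube subgraph with fewer than six edges leaving; this is what disposes of the cube-shaped difficult components arising in the reductions, rather than cubes being the target configuration found by discharging.

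The discharging argument in the paper (Lemma~\ref{finding a special vertex}) is also aimed differently from what you sketch: it is applied to the subgraph inside an \emph{innermost} separating cycle of length at most five, with charges $\deg(v)-4$ in the interior, $\deg(v)-2$ on the cycle, and $|f|-4$ on faces, to produce the required degree-three vertex avoiding all short separating cycles. So while your overall architecture (induction, local reductions, discharging to certify reducibility) is the right shape, the content of the two hard steps --- identifying the correct reducible configuration and proving it always exists --- is absent, and the specific guess you make for the tight case (cube blocks handled by keeping seven of eight vertices) is not a substitute for it. As written, the proposal is a plan rather than a proof.
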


The proof of Theorem~\ref{4/5 bound} is the subject of Section~\ref{four-fifths-section}.

\begin{define}
  If $G$ is a plane graph, we let $\threefaces(G)$ denote the minimum size of a set of faces such that every vertex in $G$ of degree at most three is incident to at least one of them.
\end{define}

We actually prove the following, which easily implies Theorem~\ref{bounded-degree-three}.

\begin{thm}\label{real-bigO-thm}
  If $G$ is a triangle-free plane graph on $n$ vertices, then either $G$ is 2-degenerate or $G$ contains an induced 2-degenerate subgraph on at least
  \begin{equation*}
    \frac{7}{8}n - \bigOconstant \left(\threefaces(G) - 2\right)
  \end{equation*}
  vertices. 
\end{thm}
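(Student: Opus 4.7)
The strategy is to refine the discharging argument that underlies Theorem~\ref{4/5 bound} so that it yields the cube-tight constant $\frac{7}{8}$, at the price of the additive error term $\bigOconstant(\threefaces(G)-2)$. I would proceed by induction on $|V(G)|$, likely strengthening the statement by allowing a prescribed set of faces that covers all vertices of degree at most three, so that the parameter $\threefaces$ can be tracked cleanly across reductions.

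First, I would take a minimum counterexample $G$ (minimizing $|V(G)|$, then $|E(G)|$), fix a plane embedding, and fix a minimum face cover $\mathcal{F}$ of the vertices of degree at most three, so that $|\mathcal{F}|=\threefaces(G)$. Standard arguments using cut-vertices, bridges, and short separating cycles reduce to the case that $G$ is $2$-connected with no ``trivial'' reductions available. Since $G$ is not $2$-degenerate, it contains a nonempty subgraph of minimum degree at least three, giving access to the local structure needed for discharging.

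The heart of the argument is a discharging scheme on the embedded graph. Assign each vertex $v$ the initial charge $\deg(v)-4$ and each face $f$ the initial charge $|f|-4$; triangle-freeness makes the face charges nonnegative, and Euler's formula gives total charge $-8$. I would choose rules that push charge from $4^+$-vertices and $\geq\!6$-faces toward $3$-vertices and $4$-faces, designed so that every vertex or face outside a bounded neighborhood of $\mathcal{F}$ ends with nonnegative charge. All negative final charge therefore concentrates within $O(\threefaces(G))$ bounded regions, and a local analysis of the remaining negative-charge pockets produces a list of reducible configurations (e.g.\ short faces incident to specific degree patterns, or small ``cube-like'' blocks attached by few vertices) that must appear in $G$.

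For each reducible configuration, the reduction deletes a small set $S\subseteq V(G)$, producing a triangle-free plane graph $G'=G-S$ to which induction applies. I would then show that any induced $2$-degenerate subgraph $D'$ of $G'$ extends, by adding back a controlled subset of $S$, to an induced $2$-degenerate subgraph $D$ of $G$ satisfying the bound $|D|\geq \tfrac{7}{8}n-\bigOconstant(\threefaces(G)-2)$. The main obstacle, and the reason for the shape of the bound, is precisely this accounting: deleting $S$ may merge several faces and create new vertices of degree at most three on the merged face, so $\threefaces(G')$ can exceed $\threefaces(G)$. The discharging rules and the choice of reducible configurations must be coordinated so that, for each reduction, the combined loss $\bigl(\tfrac{7}{8}|S|-(|D|-|D'|)\bigr) + \bigOconstant\bigl(\threefaces(G')-\threefaces(G)\bigr)$ is at most zero; this is the delicate bookkeeping that I expect to absorb most of the technical work.
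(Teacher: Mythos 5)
Your framework---minimal counterexample, initial charges $\deg(v)-4$ and $|f|-4$, total charge $-8$, negative charge forced to concentrate near the face cover, inductive reductions that track $\threefaces$---does match the skeleton of the paper's argument. But the step you gloss as ``a local analysis of the remaining negative-charge pockets produces a list of reducible configurations (short faces incident to specific degree patterns, or small cube-like blocks)'' is where the proof actually lives, and no list of \emph{bounded local} configurations can do the job. A face $f\in F$ may be incident with arbitrarily many vertices of degree three; after it pays one unit to each incident vertex it carries charge $-4$ regardless of its length, so the compensating positive charge must be harvested from a large region around $f$ rather than from a bounded neighborhood. The paper's mechanism is global around each $f\in F$: it first shows that distinct faces of $F$ are at distance at least $21$ and that no $G$-normal cut on at most $20$ vertices can separate two faces of the cover, and uses this to prove a \emph{layering lemma}: for $k\le 9$, the vertices at distance $k$ from $f$ induce a cycle $C_k$ in which each vertex has at most one neighbor in $C_{k-1}$, so the first ten layers form a near-cylindrical-grid annulus on at least $184$ vertices. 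The ``reducible configuration'' is this entire unbounded annulus $X=C_0\cup\dots\cup C_9$: delete it, apply induction, and re-insert all of $X$ except a hitting set $R$ whose size is controlled by the total irregularity $\sum_k n(f,k)$ of the layers. Being a counterexample forces $|R|>\tfrac18|X|$, whence $8\sum_k n(f,k)\ge 249$, while the discharging shows $\sum_k n(f,k)\le 30$ for some $f\in F$. That tension---either the annulus is regular enough that only one vertex in eight need be removed (this is the source of $\tfrac78$: one deleted vertex per two consecutive cycles of length at least four, as for cylindrical grids), or it is irregular enough to feed $f$ nonnegative charge---is the idea missing from your outline.

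A second, related gap: you treat the behavior of $\threefaces$ under deletion as end-stage bookkeeping, to be balanced against the gain of each reduction. In the paper every reduction is engineered so that $\threefaces$ does \emph{not} increase, and establishing that this is possible is itself the content of the ``faces of $F$ are pairwise far apart'' and ``no small separator'' lemmas (whose proofs are what force the error term to be at least $18(\threefaces(G)-2)$ and thereby fix the constant $\bigOconstant$). A reduction in which $\threefaces(G')>\threefaces(G)$ incurs a penalty of $\bigOconstant$ per extra face, and there is nothing of comparable size to pay it with unless the deleted region is already known to be large and grid-like---which is exactly what the layering lemma provides. So the coordination you defer to ``delicate bookkeeping'' has to be resolved structurally, before the discharging is run, not after.
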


The proof of Theorem~\ref{real-bigO-thm} is the subject of Section~\ref{bounded-degree-three-section}.

Let us discuss some related results.  To simplify notation, for a graph $G$ we let $\degen_k(G)$ denote the size of a maximum induced subgraph that is $k$-degenerate.  Alon, Kahn, and Seymour \cite{AKS87} proved in 1987 a general bound on $\degen_k(G)$ based on the degree sequence of $G$.  They derive as a corollary that if $G$ is a graph on $n$ vertices of average degree $d\geq 2k$, then $\degen_k(G) \geq \frac{k + 1}{d + 1}n$.  Since triangle-free planar graphs have average degree at most four, this implies that if $G$ is triangle-free and planar then $\degen_2(G) \geq \frac{3}{5}n$.  Our Theorem~\ref{four-fifths-thm} improves upon this bound.

For the remainder of this section, let $G$ be a planar graph on $n$ vertices.

Note that a graph is 0-degenerate if and only if it is an independent set.  The famous Four Color Theorem, the first proof of which was announced by Appel and Haken \cite{AH76} in 1976, implies that $\degen_0(G) \geq \frac{1}{4}n$.  In the same year,
Albertson \cite{A76} proved the weaker result that $\degen_0(G) \geq \frac{2}{9}n$, which was improved to $\degen_0(G) \geq \frac{3}{13}n$ by Cranston and Rabern~\cite{cranston2016planar};
the constant factor $\frac{3}{13}$ is the best known to date without using the Four Color Theorem.  The factor $\frac{1}{4}$ is easily seen to be best possible by considering copies of $K_4$.

If additionally $G$ is triangle-free, a classical theorem of Gr\H otzsch \cite{G58} says that $G$ is 3-colorable, and therefore $\degen_0(G) \geq \frac{n}{3}$.  In fact, Steinberg and Tovey \cite{ST93} proved that $\degen_0(G) \geq \frac{n + 1}{3}$, and a construction of Jones \cite{J84} implies this is best possible.  Dvo\u{r}\'ak and Mnich \cite{DM17} proved that there exists $\varepsilon > 0$ such that if $G$ has girth at least five, then $\degen_0(G) \geq \frac{n}{3 - \varepsilon}$.

Note that a graph is 1-degenerate if and only if it contains no cycles.  In 1979, Albertson and Berman \cite{AB79} conjectured that every planar graph contains an induced forest on at least half of its vertices, i.e.\ $\degen_1(G) \geq \frac{1}{2}n$.  The best known bound for $\degen_1(G)$ for planar graphs is $\frac{2}{5}n$, which follows from a classic result of Borodin \cite{B76} that planar graphs are acyclically 5-colorable.

Akiyama and Watanabe \cite{AW87} conjectured in 1987 that if additionally $G$ is bipartite then $\degen_1(G) \geq \frac{5n}{8}$, and this may also be true if $G$ is only triangle-free.  The best known bound when $G$ is bipartite is $\degen_1(G) \geq \lceil\frac{4n + 3}{7}\rceil$, which was proved by Wan, Xie, and Yu \cite{WXY17}.  The best known bound when $G$ is triangle-free is $\degen_1(G) \geq \frac{5}{9}n$, which was proved by Le \cite{L16} in 2016.  Kelly and Liu \cite{KL17} proved that if $G$ has girth at least five, then $\degen_1(G) \geq \frac{2}{3}n$.

Kierstead, Oum, Qi, and Zhu \cite{OZ+} proved that if $G$ is a planar graph on $n$ vertices then $\degen_3(G) \geq \frac{5}{7}n$, but the proof is yet to appear.  A bound for $\degen_3(G)$ of $\frac{5}{6}n$ may be possible, which is achieved by both the octahedron and the icosahedron.

In 2015, Lukot'ka, Maz\' ak, and Zhu \cite{LMZ15} studied $\degen_4$ for planar graphs.  They proved that $\degen_4(G) \geq \frac{8}{9}n$.  A bound for $\degen_4(G)$ of $\frac{11}{12}n$ may be possible, which is achieved by the icosahedron.

So far, bounds on $\degen_2(G)$ for planar graphs have not been studied.  However, as Lukot'ka, Maz\' ak, and Zhu \cite{LMZ15} pointed out, it is easy to see that every planar graph contains an induced outerplanar subgraph on at least half of its vertices.  Since outerplanar graphs are 2-degenerate, this implies $\degen_2(G) \geq \frac{1}{2}n$.  Nevertheless, a bound of $\degen_2(G) \geq \frac{2}{3}n$ may be possible, which is achieved by the octahedron.  If $G$ has girth at least five, $\degen_2(G)$ may be as large as $\frac{19}{20}n$, which is achieved by the dodecahedron.


\section{Proof of Theorem \ref{4/5 bound}}\label{four-fifths-section}
In this section we prove Theorem \ref{4/5 bound}.  First we prove some properties of a hypothetical minimal counterexample (i.e.,
a plane triangle-free graph $G$ with the smallest number $n$ of vertices such that $\alpha_2(G)<\frac{6n - m - \lambda}{5}$,
where $m=|E(G)|$ and $\lambda$ is the number of difficult components of $G$).

\subsection{Preliminaries}

\begin{lemma}\label{no difficult components}
  A minimal counterexample $G$ to Theorem \ref{4/5 bound} is connected and has no difficult components.
\end{lemma}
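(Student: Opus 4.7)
The plan is to show that a minimal counterexample $G$ can be neither disconnected nor difficult, since in either case $G$ would itself satisfy the bound of Theorem~\ref{4/5 bound} and hence fail to be a counterexample.

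To handle disconnectedness, suppose $G$ decomposes into components $G_1, \dots, G_k$ with $k \ge 2$. Each of $n$, $m$, $\lambda$, and $\degen_2$ is additive over components, and each $G_i$ has strictly fewer than $n$ vertices, so by minimality of $G$ we have $\degen_2(G_i) \ge (6n_i - m_i - \lambda_i)/5$ for every $i$; summing over $i$ yields the required bound for $G$, a contradiction.

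For the second part, combined with connectedness the claim reduces to showing that $G$ itself is not difficult. Assume for contradiction that $G$ is connected and difficult; I would prove by induction on the number of blocks of $G$ that $\degen_2(G) \ge (6n - m - 1)/5$. The base cases (single vertex, single edge, single cube) are verified directly: for the cube $Q_3$, one checks that $Q_3$ minus any single vertex is 2-degenerate, giving $\degen_2(Q_3) \ge 7 = (48 - 12 - 1)/5$. For the inductive step, pick a leaf block $B$ of the block-cut tree of $G$ with cut-vertex $v$, and let $G' := G - (V(B) \setminus \{v\})$. Then $G'$ is connected and difficult with strictly fewer vertices, so by minimality $\degen_2(G') \ge (6n(G') - m(G') - 1)/5$. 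Choose a maximum 2-degenerate induced subgraph $S$ of $G'$; I would then extend $S$ to a 2-degenerate induced subgraph of $G$ by adjoining the leaf endpoint of $B$ if $B$ is a bridge (gaining $1$ vertex, which matches the increment $(6-1)/5=1$ in the bound), or by adjoining $V(B) \setminus \{v\}$ if $B$ is a cube and $v \notin S$ (gaining $7$), or $V(B) \setminus \{v, w\}$ for a neighbor $w$ of $v$ in $B$ if $B$ is a cube and $v \in S$ (gaining $6$, which matches the increment $(42-12)/5=6$).

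I expect the technical heart of the argument to be the last subcase: verifying 2-degeneracy of $G[S \cup (V(B) \setminus \{w\})]$ when $v \in S$, since edges from $v$ into $S$ could a priori break the argument. The key observation is that $Q_3 - w$ admits a 2-degeneracy elimination ordering ending at $v$, exhibited by peeling off the two cube-neighbors of $v$ other than $w$ first and then the remaining vertices. Prepending this ordering (with $v$ omitted) to a 2-degeneracy elimination ordering of $G'[S]$ witnesses 2-degeneracy of the extended set, because each cube-vertex removed in the first phase has no neighbors outside $V(B)$, so its degree in the current subgraph of $G$ equals its degree in the remaining $Q_3 - w$ at each step. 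The subcase $v \notin S$ is easier: there are no edges between $S$ and $V(B) \setminus \{v\}$ (since every edge from $V(B)$ to the rest of $G$ passes through $v$), and $Q_3 - v$ is 2-degenerate, so the two pieces combine trivially.
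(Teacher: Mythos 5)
Your proposal is correct and follows essentially the same strategy as the paper: connectivity via additivity of $n$, $m$, $\lambda$, and $\degen_2$ over components, and the difficult case by peeling an end-block (a pendant edge or a cube) and comparing the number of vertices gained against the increment in the bound. The only cosmetic difference is that the paper deletes the entire cube end-block including its cut vertex and re-adds seven of its eight vertices, whereas you keep the cut vertex in $G'$ and split into the cases $v\in S$ and $v\notin S$; your explicit elimination ordering of the cube minus a vertex ending at a prescribed vertex makes the 2-degeneracy verification cleaner than the paper's one-line assertion.
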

\begin{proof}
  Note that the union of induced 2-degenerate subgraphs from each component of
  $G$ is an induced 2-degenerate subgraph of $G$.  Thus if $G$ is not
  connected, then one of its components is a smaller counterexample, a
  contradiction.
  
  Now suppose for a contradiction that $G$ has a difficult component.  Since $G$ is connected, $G$ is difficult.
  Note that $G$ is not a single vertex, or else $G$ is not a counterexample.
  Suppose that $G$ contains a vertex $x$ of degree $1$; in this case, note that $G-x$ is a difficult graph.
  Since $G$ is a minimal counterexample, there exists a set $S\subseteq V(G-x)$ that induces a 2-degenerate subgraph of size at least
  \begin{equation*}
    \frac{6|V(G-x)| - |E(G-x)| - 1}{5}=\frac{6|V(G)| - |E(G)| - 1}{5} - 1.
  \end{equation*}
  But then $S\cup \{x\}$ induces a 2-degenerate subgraph in $G$, contradicting that $G$ is a minimal counterexample.
  
  Therefore, $G$ has minimum degree at least $2$.  Note that $G$ is not a cube, or else $G$ is not a counterexample.
  Since $G$ is difficult, we conclude that $G$ is not $2$-connected and any end-block of $G$ is a cube.
  Let $X$ be the vertex set of an end-block of $G$, and observe that $G-X$ is a difficult graph.
  Since $G$ is a minimal counterexample, there exists a set $S\subseteq V(G-X)$ that induces a 2-degenerate subgraph of size at least
  \begin{equation*}
    \frac{6|V(G-X)| - |E(G-X)| - 1}{5}=\frac{6|V(G)| - |E(G)| - 1}{5} - 7.
  \end{equation*}
  But then for any $v\in X$, $S\cup X\setminus\{v\}$ induces a 2-degenerate subgraph in $G$, contradicting that $G$ is a minimal counterexample.
\end{proof}

We will often make use of the following induction lemma.
\begin{lemma}\label{induction lemma}
  Let $G$ be a minimal counterexample to Theorem \ref{4/5 bound}, and let $X\subseteq V(G)$.  If every induced 2-degenerate subgraph of $G-X$ can be extended to one of $G$ by adding $A$ vertices, then
  \begin{equation*}
    \lambda' \ge 5A - 6|X| + |E(G)| - |E(G-X)| + 1,
  \end{equation*}
  where $\lambda'$ is the number of difficult components of $G-X$.
\end{lemma}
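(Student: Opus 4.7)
The plan is to turn the assumed extension property into a contradiction with the counterexample inequality for $G$, mediated by the minimality assumption. Beyond the hypothesis, the only ingredients will be Lemma \ref{no difficult components} and elementary arithmetic.

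First I would invoke Lemma \ref{no difficult components} to conclude that $\lambda = 0$ for the minimal counterexample $G$; this turns the defining inequality $\alpha_2(G) < (6|V(G)| - |E(G)| - \lambda)/5$ into the strict bound $\alpha_2(G) < (6|V(G)| - |E(G)|)/5$, which is the form I want on the right-hand side. The edge case $X = \emptyset$ I dispose of separately (the hypothesis forces $A \le 0$ by maximality of a largest 2-degenerate subgraph, in which case there is nothing to prove); otherwise I may assume $X$ is nonempty, so that $G - X$ is strictly smaller than $G$.

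Next I would feed $G - X$ into the minimality assumption: since $G-X$ has fewer vertices than $G$, it is not itself a counterexample, so Theorem \ref{4/5 bound} applied to $G-X$ yields an induced 2-degenerate subgraph $S$ of $G - X$ with
\[
|S| \;\ge\; \frac{6|V(G-X)| - |E(G-X)| - \lambda'}{5}.
\]
The hypothesis of the lemma then lifts $S$ to an induced 2-degenerate subgraph of $G$ of size at least $|S| + A$, which by the counterexample inequality for $G$ must be strictly less than $(6|V(G)| - |E(G)|)/5$.

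Finally I would combine the two bounds, substitute $|V(G-X)| = |V(G)| - |X|$, clear the factor of $5$, and rearrange to obtain the strict inequality $\lambda' > 5A - 6|X| + |E(G)| - |E(G-X)|$; since $\lambda'$ is an integer, the stated $+1$ follows. The main ``obstacle'' here is really nonexistent: the lemma is a convenient packaging of one step of induction together with the extension hypothesis, and once Lemma \ref{no difficult components} and the minimality of $G$ are invoked the proof reduces to a few lines of bookkeeping. The only mildly delicate point is that the $+1$ comes from the strictness of the counterexample inequality combined with the integrality of $\lambda'$, so that strictness must be preserved throughout the manipulation.
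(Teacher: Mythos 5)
Your proof is correct and matches the paper's argument essentially line for line: apply minimality of $G$ to get the lower bound $|S|\ge\frac{6(|V(G)|-|X|)-|E(G-X)|-\lambda'}{5}$ for a maximum induced 2-degenerate subgraph $S$ of $G-X$, use Lemma~\ref{no difficult components} together with the counterexample property of $G$ to get the strict upper bound $|S|+A<\frac{6|V(G)|-|E(G)|}{5}$, and combine these with the integrality of $\lambda'$ to obtain the $+1$. The only difference is your separate treatment of the $X=\emptyset$ edge case, which the paper leaves implicit (and which never arises in its applications of the lemma).
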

\begin{proof}
  Let $S\subseteq V(G-X)$ induce a maximum 2-degenerate subgraph in $G-X$.  Since $G$ is a minimal counterexample, 
  \begin{equation}\label{lower bound for G-X}
    |S| \geq \frac{6(|V(G)| - |X|) - |E(G-X)| - \lambda'}{5}.
  \end{equation}
  Note that $G$ has no difficult components by Lemma \ref{no difficult components}.  Since $S$ can be extended to induce a 2-degenerate subgraph in $G$ by adding $A$ vertices of $X$,
  \begin{equation}\label{upper bound for G-X}
    |S| + A < \frac{6|V(G)| - |E(G)|}{5}.
  \end{equation}
  Combining \eqref{lower bound for G-X} and \eqref{upper bound for G-X} yields $\lambda' > 5A - 6|X| + |E(G)| - |E(G-X)|$,
  which gives the desired inequality since both sides are integers.
\end{proof}

\begin{lemma}\label{edges leaving cube}
  A minimal counterexample $G$ to Theorem \ref{4/5 bound} has no subgraph isomorphic to the cube that has fewer than six edges leaving.
\end{lemma}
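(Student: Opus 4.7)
The plan is to apply Lemma~\ref{induction lemma} with $X = V(H)$ and $A = 7$, using the fact that the cube minus any single vertex is 2-degenerate. First, since $G$ is triangle-free and planar, Euler's formula gives $|E(G[X])| \leq 2 \cdot 8 - 4 = 12$, and because $G[X]$ contains the 12 edges of $H$, we must have $G[X] = H$; hence $|E(G)| - |E(G-X)| = 12 + e$, where $e$ denotes the number of edges leaving $X$.

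The main technical step, which I will call the \emph{extension claim}, is that for every induced 2-degenerate subgraph $S$ of $G-X$ there is some $v \in X$ such that $G[S \cup (X \setminus \{v\})]$ is 2-degenerate. Writing $s_x := |N(x) \cap S|$ for each $x \in X$, we have $\sum_{x \in X} s_x \leq e \leq 5$. A successful extension corresponds to a 2-degenerate elimination order of the cube minus $v$ in which every vertex $y$ is removed only once its current remaining degree plus $s_y$ is at most $2$. For any fixed $v$, a 2-degenerate elimination order of the cube minus $v$ has total ``slack'' $\sum_y (2 - c_y) = 14 - 9 = 5$, matching the bound $\sum s_x \leq 5$. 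I would prove the extension claim by a case analysis based on the distribution of $A := \{x \in X : s_x \geq 1\}$ across the cube, handling separately the subcase where some $s_x \geq 3$ (in which case one takes $v = x$ and the remaining budget drops to at most $2$), and otherwise choosing $v$ so that the $s = 0$ vertices can serve as the ``last three'' in a suitable peeling order.

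Granted the extension claim, Lemma~\ref{induction lemma} yields
\begin{equation*}
  \lambda' \geq 5 \cdot 7 - 6 \cdot 8 + (12 + e) + 1 = e,
\end{equation*}
where $\lambda'$ is the number of difficult components of $G-X$. On the other hand, since $G$ is connected and $X$ induces the connected graph $H$, each component of $G-X$ meets $X$ in at least one edge, so $G-X$ has at most $e$ components and $\lambda' \leq e$. Equality $\lambda' = e$ therefore forces every component of $G-X$ to be difficult and to attach to $X$ through exactly one edge.

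The block decomposition of $G$ now comprises the single cube block on $X$, the $e$ bridges attaching $X$ to the components, and the vertex/edge/cube blocks of the $e$ difficult components. The cube blocks remain pairwise vertex-disjoint, since $X$ is disjoint from each component and the cube blocks inside each difficult component are pairwise disjoint by definition, so $G$ itself is difficult, contradicting Lemma~\ref{no difficult components}. The crux of the argument is the extension claim, whose proof requires an explicit combinatorial analysis of 2-degenerate peeling orders of the cube minus a vertex, since the budget and the slack match exactly and a poor choice of $v$ genuinely can fail.
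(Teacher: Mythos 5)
Your overall architecture is exactly the paper's: show that every induced $2$-degenerate subgraph of $G-X$ extends by seven vertices of the cube, feed $A=7$ into Lemma~\ref{induction lemma} to get $\lambda'\ge e$, and then use connectivity to force $G-X$ to consist of exactly $e$ difficult components each attached by a single edge, so that $G$ itself is difficult, contradicting Lemma~\ref{no difficult components}. The Euler-formula observation that $G[X]$ has no edges beyond the twelve cube edges is a nice touch that the paper leaves implicit, and the accounting $\lambda'\ge 5\cdot 7-6\cdot 8+(12+e)+1=e$ is correct.

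The gap is the extension claim itself, which is the entire substance of the lemma and which you announce rather than prove. The budget/slack computation only shows that the necessary counting condition $\sum_{x\ne v}s_x\le 5=\sum_y(2-c_y)$ is (tightly) satisfiable; it does not produce a valid peeling order, and since the inequality is tight there is no slack for a soft argument. Your two-branch plan is also not yet a correct outline. In the branch where some $s_x\ge 3$, after deleting $x$ you still must exhibit the order; the paper does this via the $3$-edge-connectivity of the cube, which gives $\sum_{y\in X'}\deg_{G[X'\cup S]}(y)\le 3|X'|-3+2<3|X'|$ for every non-empty $X'\subseteq X\setminus\{x\}$. In the remaining branch, ``put the $s=0$ vertices last'' fails for a bad choice of $v$: if the at most five attachment vertices include all four vertices of a face of the cube, then deleting a vertex off that face leaves four $s=1$ vertices inducing a $4$-cycle, which admits no ordering in which each vertex has at most one earlier neighbor; and when some vertex has exactly two outside neighbors there is a genuinely exceptional configuration (in the paper's labelling, $v_1$ has two outside neighbors and each of $u_1,v_2,v_4$ has one) in which the natural choice $v=v_1$ fails and one must take $v=u_1$ instead. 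The paper's proof is precisely the case analysis you defer---organized by the maximum of the $s_x$ and, in the all-$s_x\le 1$ case, by whether the attachment vertices contain a face of the cube---and without carrying it out the lemma is not established.
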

\begin{proof}
  Let $X = \{v_1, v_2, v_3, v_4, u_1, u_2, u_3, u_4\}$ induce a cube in $G$ where $v_1v_2v_3v_4v_1$ and $u_1u_2u_3u_4u_1$ are 4-cycles and $v_i$ is adjacent to $u_i$ for each $i\in\{1, 2, 3, 4\}$, as in Figure \ref{cube figure}.
  Suppose for a contradiction that $|E(X, V(G-X))| \leq 5$.  Let $S$ induce a 2-degenerate subgraph in $G - X$.
  First, we claim that there is some vertex $v\in X$ such that $S\cup X\setminus\{v\}$ induces a 2-degenerate subgraph in $G$.

  If $v_1$ has at least three neighbors not in $X$, then $S\cup X\setminus\{v_1\}$ induces a 2-degenerate subgraph in $G$:
  Since $G[S]$ is $2$-degenerate, it suffices to verify that for every non-empty $X'\subseteq X\setminus\{v_1\}$, there
  exists a vertex $x\in X'$ with at most two neighbors in $S\cup X'$.  Since the cube is $3$-edge-connected, there are
  at least three edges with one end in $X'$ and the other end in $X\setminus X'$.  Since there are at most five edges leaving $X$
  and at least three of them are incident with $v_1$, at most two such edges are incident with vertices of $X'$.
  Consequently, $\sum_{x\in X'} \deg_{G[X'\cup S]} (x)\le 3|X'|-3+2$, and thus $X'$ indeed contains a vertex whose degree in $G[X'\cup S]$ is
  less than three.

  By symmetry, we may assume no vertex in $X$ has more than two neighbors not in $X$.
  If $v_1$ has two neighbors not in $X$, an analogous argument using the fact that the only $3$-edge-cuts in the cube are the neighborhoods
  of vertices shows that $S\cup X\setminus\{v_1\}$ induces a 2-degenerate subgraph in $G$, unless each of $u_1$, $v_2$, and $v_4$ has
  a neighbor not in $X$.  However, in that case it is easy to verify that $S\cup X\setminus\{u_1\}$ induces a 2-degenerate subgraph in $G$.

  Hence, we may assume that each vertex of $X$ has at most one neighbor not in $X$.  Let $Z\subseteq X$ be a set of size exactly $5$
  containing all vertices of $X$ with a neighbor outside of $X$.  If $Z$ contains all vertices of a face of the cube, then by symmetry
  we can assume that $Z=\{v_1,v_2,v_3,v_4,u_1\}$, and $S\cup X\setminus\{v_2\}$ induces a 2-degenerate subgraph in $G$.
  Otherwise, we have $|Z\cap \{v_1,v_2,v_3,v_4\}|\le 3$ and $|Z\cap \{u_1,u_2,u_3,u_4\}|\le 3$, and since $|Z|=5$, by symmetry we can assume that
  $|Z\cap \{v_1,v_2,v_3,v_4\}|=2$ and $v_1\in Z$.  However, then $S\cup X\setminus\{v_1\}$ induces a 2-degenerate subgraph in $G$.

  This confirms that every set inducing a $2$-degenerate subgraph of $G-X$ can be extended to a set inducing a $2$-degenerate subgraph of $G$
  by the addition of $7$ vertices.
  Let $\lambda'$ be the number of difficult components of $G-X$.  By Lemma \ref{induction lemma}, $\lambda' \geq |E(X, V(G-X))|$.
  Since $G$ is connected, it follows that $G-X$ consists of exactly $|E(X, V(G-X))|$ difficult components, each connected by exactly
  one edge to the cube induced by $X$.  But then $G$ is a difficult graph, contradicting Lemma \ref{no difficult components}.
\end{proof}

\begin{lemma}\label{min degree three}
  A minimal counterexample $G$ to Theorem \ref{4/5 bound} has minimum degree at least three.
\end{lemma}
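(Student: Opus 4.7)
The plan is to assume for contradiction that $G$ has a vertex $v$ of degree $d\le 2$ and derive that $G$ itself must be difficult, contradicting Lemma~\ref{no difficult components}. First, since $G$ is connected and not difficult, $G$ has at least two vertices and so $d\ge 1$. The natural tool is Lemma~\ref{induction lemma} applied with $X=\{v\}$: because $v$ has at most two neighbors in $G$, any induced 2-degenerate subgraph $S$ of $G-v$ extends to one of $G$ by adding $v$ (in $G[S\cup\{v\}]$ the vertex $v$ has degree at most two, while every subgraph not containing $v$ lies inside the 2-degenerate graph $G[S]$). Thus $A=1$, $|X|=1$, and $|E(G)|-|E(G-X)|=d$, so the lemma yields that $G-v$ has at least $d$ difficult components.

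In the case $d=1$, $v$ is a leaf, so $G-v$ is connected, and the induction lemma then forces $G-v$ to be difficult. But then $G$ is obtained from a difficult graph by attaching a pendant edge, which merely adds a new bridge-edge block; all blocks of $G$ are therefore vertices, edges, or cubes, and the cube blocks remain pairwise vertex-disjoint. Hence $G$ is difficult, contradicting Lemma~\ref{no difficult components}.

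In the case $d=2$, let $u_1,u_2$ be the two neighbors of $v$; the induction lemma gives at least two difficult components of $G-v$. If $u_1$ and $u_2$ were in the same component of $G-v$, then $G-v$ would be connected by connectedness of $G$, contradicting $\lambda'\ge 2$. So $u_1$ and $u_2$ lie in distinct components, and connectedness of $G$ forces these to be the only two components of $G-v$; call them $C_1$ and $C_2$, both of which must be difficult. Then $vu_1$ and $vu_2$ are bridges of $G$, so the block decomposition of $G$ consists of the blocks of $C_1$ and $C_2$ together with the two edge blocks $vu_1$ and $vu_2$. Every block is a vertex, edge, or cube, and the cube blocks are pairwise vertex-disjoint, so $G$ is difficult, again contradicting Lemma~\ref{no difficult components}.

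The argument is essentially bookkeeping with the induction lemma and the definition of a difficult graph. The main point requiring care is verifying that attaching a leaf, or a path of two bridge edges through a new degree-$2$ vertex, preserves the defining conditions of being difficult; no serious obstacle beyond this is expected.
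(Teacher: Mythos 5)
Your proof is correct and follows essentially the same route as the paper: apply Lemma~\ref{induction lemma} with $X=\{v\}$ and $A=1$ to conclude that $G-v$ has at least $\deg(v)$ difficult components, and then observe that this forces $G$ itself to be difficult, contradicting Lemma~\ref{no difficult components}. The paper leaves the final step ("but then $G$ is difficult") implicit, whereas you spell out the case analysis on $\deg(v)\in\{1,2\}$; this is just added detail, not a different argument.
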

\begin{proof}
  Suppose not.  Let $v\in V(G)$ be a vertex of degree at most two.  Note that $v$ has degree at least one by Lemma \ref{no difficult components}.  Note also that any induced 2-degenerate subgraph of $G-v$ can be extended  to one of $G$ by adding $v$.  By Lemma \ref{induction lemma}, if $G-v$ has $\lambda'$ difficult components, then $\lambda' \geq \deg(v)$.  But then $G$ is a difficult graph, contradicting Lemma \ref{no difficult components}.
\end{proof}


\subsection{Reducing vertices of degree three}

A cycle $C$ in a plane graph is \emph{separating} if both the interior and the exterior of $C$ contain at least one vertex.
The main result of this subsection is the following lemma.
\begin{lemma}\label{no special vertices}
  A minimal counterexample $G$ to Theorem \ref{4/5 bound} contains no vertex of degree three that is not contained in a separating cycle of length four or five.
\end{lemma}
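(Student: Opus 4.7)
The plan is to apply Lemma~\ref{induction lemma} to $X=\{v\}$.  Writing $N(v)=\{a,b,c\}$ (an independent set by triangle-freeness), I claim that every induced 2-degenerate subgraph $S$ of $G-v$ satisfies $G[S\cup\{v\}]$ being 2-degenerate, so that $A=1$ suffices in the lemma.  Granting this claim, the lemma yields $\lambda'\ge 5-6+3+1=3$ difficult components in $G-v$; combined with $\deg(v)=3$ and the connectedness of $G$, $G-v$ consists of exactly three difficult components, one containing each of $a$, $b$, $c$.  Since no cycle in $G$ passes through $v$ (as $a,b,c$ lie in distinct components of $G-v$), each of $va$, $vb$, $vc$ is its own block, so together with the blocks of the three difficult components every block of $G$ is a vertex, edge, or cube, with cube blocks pairwise vertex-disjoint.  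Thus $G$ is difficult, contradicting Lemma~\ref{no difficult components}.

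The key claim reduces to showing that, when $\{a,b,c\}\subseteq S$, the graph $G[S\cup\{v\}]$ is 2-degenerate (the case $|S\cap\{a,b,c\}|\le 2$ being immediate by peeling $v$ first).  If not, $G[S\cup\{v\}]$ contains a subgraph $H$ of minimum degree at least three; since $G[S]$ is 2-degenerate, $v\in V(H)$ and thus $\{a,b,c\}\subseteq V(H)$.  Because $H$ is triangle-free and planar with $\delta(H)\ge 3$, Euler's formula forces $|V(H)|\ge 8$, with equality precisely when $H$ is isomorphic to the cube.

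For the subcase $|V(H)|\ge 9$, a pigeonhole argument gives a common $H$-neighbor $w$ of some pair in $\{a,b,c\}$, say $a$ and $b$: $V(H)\setminus\{v,a,b,c\}$ has at least five vertices, while $a$, $b$, $c$ contribute at least six neighbor-slots in this set (each has at least two $H$-neighbors outside $\{v,a,b,c\}$ because $\{a,b,c\}$ is independent).  The 4-cycle $vawb$ then passes through $v$, and a planar-embedding analysis---using the edge $vc$ to place $c$ on one side of this cycle, and further $H$-vertices (guaranteed by $\delta(H)\ge 3$) on the other---shows this 4-cycle is separating, contradicting the hypothesis.  For the subcase $H\cong$ cube, the three 4-cycles of $H$ through $v$ are the cube-faces meeting $v$, each non-separating in $G$ by hypothesis, so each contains no $G$-vertex on its face-side.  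One then either locates a non-cube neighbor of some of $a,b,c$ producing a separating 4- or 5-cycle through $v$ (contradicting the hypothesis), or else shows the cube has fewer than six edges leaving in $G$ (contradicting Lemma~\ref{edges leaving cube}).  The main obstacle is this cube subcase, where a delicate enumeration of non-cube neighbors around the cube---constrained to the three cube-faces opposite $v$---is required to force one of the two target contradictions.
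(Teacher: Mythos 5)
Your high-level plan is appealing: if every induced $2$-degenerate subgraph of $G-v$ extended to one of $G$ by adding $v$ itself, then Lemma~\ref{induction lemma} with $X=\{v\}$ and $A=1$ would force three difficult components in $G-v$, and the conclusion that $G$ is then difficult is correct. The problem is that this extension claim is essentially the whole content of the lemma, and your proof of it has genuine holes. In the subcase $|V(H)|\ge 9$, the pigeonhole step only works when $V(H)\setminus\{v,a,b,c\}$ has \emph{exactly} five vertices: six neighbour-slots distributed over six or more vertices need not collide, so for $|V(H)|\ge 10$ no common neighbour of two of $a,b,c$ is forced. Worse, even when a common neighbour $w$ of $a$ and $b$ exists, the $4$-cycle $vawb$ need not be separating: it may bound a face of $G$, with $c$ and every remaining vertex of $H$ on the other side. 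This is exactly what happens when $H$ is a cylindrical grid $C_4\times P_k$ with $v$ on a boundary $4$-cycle; the condition $\delta(H)\ge 3$ gives $w$ a third neighbour but does nothing to place it on the opposite side of the cycle from $c$. Finally, the cube subcase, which you yourself identify as the main obstacle, is simply not carried out. Note that the cube itself (with $S=V(G)\setminus\{v\}$) shows the extension claim cannot follow from the local hypotheses on $v$ alone, so the deferred ``delicate enumeration'' invoking Lemma~\ref{edges leaving cube} is where the real work would have to happen.

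For comparison, the paper never attempts this one-vertex reduction. It instead proves a sequence of claims restricting the neighbourhood of $v$ (every neighbour has degree exactly four, no second degree-three vertex lies on a $4$-cycle through $v$, every edge at $v$ lies on a $4$-cycle --- the last proved by contracting an edge rather than deleting vertices), and only then applies Lemma~\ref{induction lemma} to the six-vertex set $\{v,u,x_1,x_2,y_1,y_2\}$, analysing the difficult components of the complement case by case. The length of that argument is a fair indication that your extension claim, if it holds at all for a minimal counterexample, is not substantially easier to establish than the lemma itself.
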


For the remainder of this subsection, let $G$ be a minimal counterexample to Theorem \ref{4/5 bound}, and suppose $v\in V(G)$ is a vertex of degree three that is not contained in a separating cycle of length four or five.  Recall that a minimal counterexample is a plane graph, so $G$ has a fixed embedding.

\begin{claim}\label{no adjacent 5-vertex}
 The vertex $v$ has no neighbors of degree at least five.
\end{claim}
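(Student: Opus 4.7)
The plan is to suppose for contradiction that $u\in N(v)$ satisfies $\deg(u)\ge 5$ and, letting $w_1,w_2$ denote the other two neighbors of $v$ (so that $\{u,w_1,w_2\}$ is independent by triangle-freeness), apply Lemma~\ref{induction lemma} with $X=\{v\}$ and $A=1$. That is, I would argue that every induced 2-degenerate subgraph $G[S]$ of $G-v$ extends to a 2-degenerate subgraph $G[S\cup\{v\}]$ of $G$. Granting this, the lemma yields $\lambda'\ge 5\cdot 1-6\cdot 1+3+1=3$, so $G-v$ has at least three difficult components. Because $\deg(v)=3$ and $G$ is connected, each of the edges $vu$, $vw_1$, $vw_2$ must join $v$ to a distinct such component as a bridge. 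Taking these three edge-blocks together with the blocks of the three difficult components exhibits $G$ itself as difficult, contradicting Lemma~\ref{no difficult components}.

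To establish the extension property, the only nontrivial case is $\{u,w_1,w_2\}\subseteq S$: in every other case $v$ has at most two neighbors in $S$ and can simply be appended to any 2-degeneracy ordering of $G[S]$. When $\{u,w_1,w_2\}\subseteq S$, a failure of 2-degeneracy produces a subgraph $H\subseteq G[S\cup\{v\}]$ with $\delta(H)\ge 3$; since $\deg_G(v)=3$, necessarily $v\in V(H)$ with $\deg_H(v)=3$, so $\{u,w_1,w_2\}\subseteq V(H)$. Then $H$ is planar and triangle-free with minimum degree $3$, and Euler's formula forces $|V(H)|\ge 8$.

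The main obstacle is to rule out the existence of such an $H$, and this is where the two hypotheses on $v$ must enter. The assumption $\deg(u)\ge 5$ ensures $u$ has at least $\deg_G(u)-\deg_H(u)\ge 2$ neighbors in $V(G)\setminus V(H)$, which, in the planar embedding of $G$, must be placed inside some face of $H$. In the tightest case $|V(H)|=8$, $H$ is forced to be the cube and the three faces of $H$ incident to $v$ are 4-cycles through $v$ in $G$; a planarity argument then forces one of these to separate an outside neighbor of $u$ from the rest of $G$, yielding a separating 4-cycle through $v$ and contradicting the hypothesis. The case $|V(H)|>8$ would be handled analogously, producing a separating cycle of length at most $5$ through $v$ from a short face of $H$ containing $v$. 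Carrying out this planarity analysis in detail, and confirming that the separating cycle has length exactly $4$ or $5$ in every subcase, is the hard part of the proof.
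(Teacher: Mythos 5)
Your overall strategy---delete only $v$, derive $\lambda'\ge 3$ from Lemma~\ref{induction lemma}, and conclude that $G$ is difficult, contradicting Lemma~\ref{no difficult components}---would be fine \emph{if} the extension property held, but that property is precisely what you have not established, and your sketch for it does not go through. The paper sidesteps the issue by taking $X=\{u,v\}$ rather than $X=\{v\}$: once $u$ is also removed, $v$ has at most two neighbors in $G-X$, so adding $v$ to any $2$-degenerate set is automatically safe. The price is that Lemma~\ref{induction lemma} then yields only one difficult component $D$ of $G-X$, which is eliminated by combining Lemma~\ref{min degree three} with Lemma~\ref{edges leaving cube}: $D$ must have an end-block $B$ isomorphic to the cube, and planarity plus triangle-freeness let $u$ and $v$ send at most three edges into $B$, so $B$ has at most four edges leaving, a contradiction.

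The concrete gaps in your extension argument are these. When $\{u,w_1,w_2\}\subseteq S$ and $2$-degeneracy fails you correctly obtain a subgraph $H\ni v$ with $\delta(H)\ge 3$ and hence $|V(H)|\ge 8$, but from there you need a separating cycle of $G$ of length $4$ or $5$ through $v$, and nothing forces one to exist. First, for $|V(H)|>8$ there is no reason for $v$ to lie on a short face of $H$ at all: $H$ is merely some triangle-free planar graph of minimum degree three, and every face of $H$ incident with $v$ can have length $6$ or more. Second, even a $4$-face of $H$ through $v$ is a face of the \emph{subgraph} $H$, not of $G$; it is a separating cycle of $G$ only if some vertex of $G$ lies strictly in its interior, and the hypothesis $\deg_G(u)\ge 5$ does not place any vertex there. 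Even in the cube case, $u$ lies on three faces of $H$, only two of which contain $v$, and both extra neighbors of $u$ may be drawn in the third face, producing no separating short cycle through $v$. So the claim that every induced $2$-degenerate subgraph of $G-v$ extends by adding $v$ remains unjustified, and the derivation of $\lambda'\ge 3$ collapses with it.
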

\begin{proof}
  Suppose for a contradiction $v$ has a neighbor $u$ of degree at least five, and let $X = \{u, v\}$.  Note that any induced 2-degenerate subgraph of $G-X$ can be extended to one of $G$ including $v$.  By Lemma \ref{induction lemma}, the number of difficult components of $G-X$ is positive.

  Let $D$ be a difficult component of $G-X$.
  First, suppose $D$ contains a vertex of degree at most one.  By Lemma \ref{min degree three}, this vertex is adjacent to $u$ and $v$, contradicting that $G$ is triangle-free.
  Therefore $D$ has an end-block $B$ isomorphic to the cube.  Since $G$ is triangle-free and planar, $u$ has at most two neighbors in $B$, and $u$ and $v$ do not both have two neighbors in $B$.  Hence $|E(X, V(B))| \leq 3$, so $B$ has at most four edges leaving, contradicting Lemma \ref{edges leaving cube}.  
\end{proof}

\begin{claim}\label{no adjacent 3-vertex}
  The vertex $v$ has no neighbors of degree three.
\end{claim}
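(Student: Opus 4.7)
The strategy is to apply Lemma \ref{induction lemma} with $X = \{u, v\}$ and $A = 2$. Let $a, b$ denote the other two neighbors of $v$ and $c, d$ the other two neighbors of $u$; by triangle-freeness the six vertices $u, v, a, b, c, d$ are distinct and the only edges among them are $uv, va, vb, uc, ud$, so $|E(G)| - |E(G-X)| = 5$.

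First I would show that every $2$-degenerate induced subgraph $S$ of $G - X$ extends to a $2$-degenerate induced subgraph of $G$ by adding both $u$ and $v$. When some vertex of $\{a, b, c, d\}$ lies outside $S$---say $a \notin S$---then in $G[S \cup \{u, v\}]$ the vertex $v$ has degree at most two (its neighbors being at most $u$ and $b$), and after eliminating $v$ the vertex $u$ has degree at most two (its neighbors being at most $c$ and $d$), so $G[S \cup \{u, v\}]$ is $2$-degenerate. The delicate case is $\{a, b, c, d\} \subseteq S$, for which the hypothesis that $v$ lies on no separating $4$- or $5$-cycle does the real work: if $G[S \cup \{u, v\}]$ were not $2$-degenerate, then every vertex would need degree at least three there, so in particular $a, b, c, d$ would each have degree exactly two in $G[S]$; tracing short paths from these vertices through $G[S]$ and back to $v$ (exploiting maximality of $S$) would then produce a separating $4$- or $5$-cycle through $v$, a contradiction.

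With $A = 2$ in hand, Lemma \ref{induction lemma} yields $\lambda' \geq 5 \cdot 2 - 6 \cdot 2 + 5 + 1 = 4$, so $G - X$ has at least four difficult components. Since only the four edges $uc, ud, va, vb$ join $X$ to $V(G) \setminus X$ and $G$ is connected, each difficult component is attached to $X$ by exactly one of these edges, and each of these edges is therefore a bridge of $G$, as is $uv$ (since deleting $uv$ separates the $u$-side from the $v$-side). Thus every block of $G$ is a single vertex, an edge, or a cube, and the cube blocks are pairwise vertex-disjoint (they lie in distinct difficult components). So $G$ is itself difficult, contradicting Lemma \ref{no difficult components}.

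The main obstacle is the case $\{a, b, c, d\} \subseteq S$ in the extendability step: naive $2$-elimination orderings can fail, and one must carefully exploit the non-separating $4$- and $5$-cycle hypothesis on $v$ together with the structural consequences of $S$ being maximum in $G - X$ to rule out the obstructing configuration.
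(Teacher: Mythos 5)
Your reduction to Lemma~\ref{induction lemma} and the endgame (four difficult components, each hanging from $X$ by a bridge, forcing $G$ to be difficult and contradicting Lemma~\ref{no difficult components}) are sound, but the whole proof hinges on the extension step with $X=\{u,v\}$ and $A=2$, and that step has a genuine gap. When $\{a,b,c,d\}\subseteq S$, both $u$ and $v$ have degree three in $G[S\cup\{u,v\}]$, so no greedy elimination order starting at $u$ or $v$ is available. Failure of $2$-degeneracy means only that \emph{some} induced subgraph $Z\subseteq S\cup\{u,v\}$ has minimum degree three --- not that every vertex of $G[S\cup\{u,v\}]$ has degree at least three, and it does not follow that $a,b,c,d$ have degree exactly two in $G[S]$. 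Such a $Z$ must contain $u,v,a,b,c,d$, and $Z\setminus\{u,v\}$ is an induced subgraph of $G[S]$ in which every vertex outside $\{a,b,c,d\}$ has degree at least three and $a,b,c,d$ have degree at least two. Nothing forces this obstruction to live near $v$: it can be a large quadrangulation-like piece of $G[S]$ whose only low-degree vertices are $a,b,c,d$, and ``tracing short paths back to $v$'' need not produce any cycle through $v$ of length four or five, let alone a separating one. Maximality of $S$ does not obviously help either, since swapping a vertex of $S$ for $u$ or $v$ preserves $|S|$ rather than increasing it. So the key claim is unproven, and I do not see how to establish it from the stated hypotheses.

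The paper sidesteps this difficulty by enlarging $X$ so that the extension becomes a trivial greedy elimination. If some neighbor $u_2$ of $v$ has degree four (degree five is excluded by Claim~\ref{no adjacent 5-vertex}), it takes $X=\{u_1,u_2,v\}$ and adds back only $v$ and $u_1$: since $u_2$ is absent, $v$ has degree at most two in every induced subgraph containing it, and $u_1$ does once $v$ is deleted, so $A=2$ is immediate. If all three neighbors of $v$ have degree three, it takes $X=\{u_1,u_2,u_3,v\}$ and adds back $u_1,u_2,u_3$, sacrificing $v$ instead. In both cases Lemma~\ref{induction lemma} yields only $\lambda'\ge 1$, and the real work is then a planarity and triangle-freeness analysis of a single difficult component using Lemmas~\ref{min degree three} and \ref{edges leaving cube} together with the separating-cycle hypothesis --- quite different from your ``four bridges'' endgame. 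To salvage your version you would need either a genuine proof of the two-vertex extension claim or a restructuring along the paper's lines.
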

\begin{proof}
  Let $u_1, u_2,$ and $u_3$ be the neighbors of $v$, and suppose for a contradiction that $u_1$ has degree three.
  
  First, let us consider the case $u_2$ has degree at least four
  (and thus exactly four by Claim~\ref{no adjacent 5-vertex}).  Note that any induced 2-degenerate subgraph of $G-\{u_1, u_2, v\}$ can be extended to one of $G$ including $v$ and $u_1$.  By Lemma \ref{induction lemma}, the number of difficult components of $G-\{u_1, u_2, v\}$ is positive.

  Let $D$ be a difficult component of $G-\{u_1, u_2, v\}$.  Note that each leaf of $D$ is adjacent to $u_1$ and $u_2$ and not adjacent to $v$ by Lemma \ref{min degree three}, since $G$ is triangle-free.  Now if $D$ has at least two leaves, then $v$ is contained in a separating cycle of length four, a contradiction.  Note also that $D$ is not an isolated vertex.  Hence, $D$ contains an end-block $B$ isomorphic to the cube.  If $D$ contains another
end-block, then we can choose $B$ among the end-blocks isomorphic to the cube so that $B$ has at most five edges leaving, contradicting Lemma \ref{edges leaving cube}.  Therefore $D$ is isomorphic to the cube.  By Lemma \ref{edges leaving cube}, every neighbor of $u_1, u_2,$ and $v$ is in $D$, contradicting that $G$ is planar and triangle-free.

  Therefore we may assume $u_2$ and symmetrically $u_3$ have degree three.  Note that any induced 2-degenerate subgraph of $G - \{u_1, u_2, u_3, v\}$ can be extended to one of $G$ including $u_1, u_2,$ and $u_3$.  By Lemma \ref{induction lemma}, the number of difficult components of $G-\{u_1, u_2, u_3, v\}$ is positive.

  Let $D$ be a difficult component of $G-\{u_1, u_2, u_3, v\}$.
  First, suppose $D$ is a tree.  If $D$ is an isolated vertex, this vertex is adjacent to $u_1, u_2,$ and $u_3$ by Lemma \ref{min degree three}, but then $v$ is contained in a separating cycle of length four, a contradiction.  Note that $D$ is not an edge, or else it is contained in a triangle with one of $u_1, u_2,$ or $u_3$, by Lemma \ref{min degree three}.  Similarly, $D$ is not a path, or else $G$ contains a triangle or a vertex of degree at most two.  Therefore $D$ has at least three leaves.  Since $G$ has minimum degree three and $\{u_1, u_2, u_3, v\}$ has only six edges leaving, $D$ is isomorphic to $K_{1, 3}$.  In this case, $G$ is isomorphic to the cube, a contradiction.

  Therefore we may assume $D$ is not a tree, so $D$ contains a block isomorphic to the cube.  Let $B$ be a block in $D$ isomorphic to the cube with the fewest edges leaving.  If $D$ contains an endblock different from $B$, then at most five edges are leaving $B$, contradicting Lemma \ref{edges leaving cube}.  Therefore $D$ is isomorphic to the cube and all six edges leaving $\{u_1, u_2, u_3, v\}$ end in $D$, contradicting that $G$ is planar and triangle-free.
\end{proof}

\begin{claim}\label{two 3-vertices in 4-cycle}
  The vertex $v$ is not contained in a cycle of length four that contains another vertex of degree three.
\end{claim}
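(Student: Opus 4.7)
The plan is to suppose for contradiction that $v$ lies on a 4-cycle $C = vuwu'v$ with $w$ of degree three, and to apply Lemma~\ref{induction lemma} with $X = V(C)$ to force many difficult components of $G - X$, then derive a contradiction from the number of edges leaving $X$. By Claims~\ref{no adjacent 5-vertex} and~\ref{no adjacent 3-vertex}, both $u$ and $u'$ have degree exactly four; let $z$ and $w'$ denote the remaining neighbors of $v$ and $w$. Triangle-freeness forces $vw \notin E(G)$, so $G[X]$ is a 4-cycle whose independence number is two. Consequently, every vertex of $G - X$ has at most two neighbors in $X$, which in particular rules out any isolated-vertex component of $G - X$, since such a vertex would need three $X$-neighbors by Lemma~\ref{min degree three}.

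The first substantive step is to verify that for every 2-degenerate induced subgraph $S$ of $G - X$, the set $S \cup \{u, v, w\}$ induces a 2-degenerate subgraph of $G$. I would do this by appending $u$, then $v$, then $w$ to a 2-degenerate ordering of $G[S]$: the vertex $u$ contributes only its two neighbors outside $X$; the vertex $v$ contributes $u$ and possibly $z$; the vertex $w$ contributes $u$ and possibly $w'$, using $vw \notin E(G)$. Thus each new vertex has at most two earlier neighbors. Taking $|X|=4$, $A=3$, and $|E(G)| - |E(G-X)| = 4 + 6 = 10$ in Lemma~\ref{induction lemma} gives
\[
\lambda' \;\geq\; 5\cdot 3 - 6\cdot 4 + 10 + 1 \;=\; 2.
\]

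The main step is then to show that every difficult component $D$ of $G - X$ satisfies $k_D := |E(V(D), X)| \geq 4$. Since only six edges leave $X$, the bound $\lambda' \geq 2$ would then force the contradiction $8 \leq \sum_D k_D \leq 6$. Let $c$ denote the number of cube blocks of $D$ and $W = V(D) \setminus \bigcup_{B} V(B)$ the set of non-cube vertices. I would exploit the difficult-graph hypothesis that cube blocks are pairwise vertex-disjoint to obtain $|V(D)| = |W| + 8c$ and, via summation of block cycle ranks (each cube contributing $5$), $|E(D)| = |W| + 13c - 1$. Summing Lemma~\ref{edges leaving cube} across the cube blocks---where a cube-to-cube edge contributes twice among the terms counting edges leaving a cube to the rest of $D$---and combining with $\sum_{v \in V(D)} d_G(v) \geq 3|V(D)|$ from Lemma~\ref{min degree three}, one obtains
\[
k_D \;\geq\; 4c + |W| + 2,
\]
which is at least $4$ whenever $|V(D)| \geq 2$ (either $c = 0$ and $|W| \geq 2$, or $c \geq 1$, which already gives $k_D \geq 6$).

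The main obstacle I anticipate is this last bound on $k_D$. Without the difficult-graph hypothesis that cube blocks are pairwise vertex-disjoint, a ``flower'' of several cubes glued at a single vertex could satisfy Lemma~\ref{edges leaving cube} while sending arbitrarily few edges to $X$, and the summation above would fail to produce a useful lower bound. Exploiting the vertex-disjointness of cube blocks is precisely what lets the leaving-edge contributions of distinct cubes be combined with controlled double-counting into the clean inequality $k_D \geq 4c + |W| + 2$.
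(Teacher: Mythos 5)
Your proof is correct, and while it opens exactly as the paper's does---the same four-vertex set $X$ inducing a chordless $4$-cycle, the same extension of any induced $2$-degenerate subgraph of $G-X$ by the three vertices $u,v,w$, and the same application of Lemma~\ref{induction lemma} giving at least two difficult components of $G-X$---it finishes by a genuinely different route. The paper picks a difficult component $D_1$ with $|E(X,V(D_1))|\le 3$ and argues locally: an isolated vertex is impossible by triangle-freeness, a leaf would be adjacent to both vertices of one of the two independent pairs in $X$ and hence would place $v$ on a separating $4$-cycle (using the standing hypothesis on $v$), and otherwise $D_1$ has a cube end-block with too few edges leaving, contradicting Lemma~\ref{edges leaving cube}. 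You instead establish the uniform bound $|E(V(D),X)|\ge |W|+4c+2$ for every difficult component $D$, which I have verified: writing $e_B$ for the number of edges from a cube block $B$ to $V(D)\setminus V(B)$, Lemmas~\ref{min degree three} and~\ref{edges leaving cube} give $|E(V(D),X)|\ge \sum_{z\in W}\bigl(3-\deg_D(z)\bigr)+\sum_B (6-e_B)$, and since the cubes are pairwise vertex-disjoint one has $\sum_{z\in W}\deg_D(z)+\sum_B e_B=2\bigl(|E(D)|-12c\bigr)$ together with the cycle-rank count $|E(D)|=|W|+13c-1$, which yields the claimed inequality; as isolated-vertex components are excluded, every difficult component sends at least $4$ edges to $X$ and $2\cdot 4>6$ is the contradiction. (Your phrasing ``combining with $\sum_{v\in V(D)}d_G(v)\ge 3|V(D)|$'' should restrict the degree sum to $W$ as above---applying it to all of $V(D)$ gives only the weaker $|W|-2c+2$---but the inequality you state and sanity-check is the right one.) Your route buys a reusable structural inequality for difficult components and, notably, does not invoke the hypothesis that $v$ avoids separating $4$- and $5$-cycles in this step except through Claims~\ref{no adjacent 5-vertex} and~\ref{no adjacent 3-vertex}; the paper's route is shorter and confined to a single component.
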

\begin{proof}
  Suppose for a contradiction that $u_1$ and $u_2$ are neighbors of $v$ with a common neighbor $w$ of degree three that is distinct from $v$, and let $X = \{u_1, u_2, v, w\}$.  By Claims \ref{no adjacent 5-vertex} and \ref{no adjacent 3-vertex}, $u_1$ and $u_2$ have degree four.  Note that any induced 2-degenerate subgraph of $G - X$ can be extended to one of $G$ including $X\setminus\{u_1\}$.  By Lemma \ref{induction lemma}, if $\lambda'$ is the number of difficult components of $G-X$, then $\lambda' \ge 2$.

  Let $D_1$ and $D_2$ be difficult components of $G-X$.  Since there are only six edges leaving $X$, we may assume without loss of generality that $|E(X, V(D_1))|  \leq 3$.  Note that $D_1$ is not an isolated vertex by Lemma \ref{min degree three} since $G$ is triangle-free.  If $D_1$ contains a leaf, then it is adjacent to either both $u_1$ and $u_2$ or both $v$ and $w$ by Lemma \ref{min degree three}.  In either case, $v$ is contained in a separating cycle of length four, a contradiction.  Therefore $D_1$ contains an end-block isomorphic to the cube, contradicting Lemma \ref{edges leaving cube}.
\end{proof}

\begin{claim}\label{3-vertex in two 4-cycles}
  Every edge incident with $v$ is contained in a cycle of length four.
\end{claim}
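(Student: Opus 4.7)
Here is the plan. Assume for contradiction that some edge incident with $v$, say $vu_1$, lies on no $4$-cycle of $G$. By Claims~\ref{no adjacent 5-vertex} and~\ref{no adjacent 3-vertex} each neighbor $u_i$ of $v$ has degree exactly $4$; denote the other neighbors of $u_1$ by $x_1, x_2, x_3$. The no-$4$-cycle assumption translates to: no $x_i$ is adjacent to $u_2$ or $u_3$. Take $X = \{v, u_1\}$, so $|X| = 2$, exactly five edges leave $X$ (two from $v$ to $u_2, u_3$, three from $u_1$ to $x_1, x_2, x_3$), and $|E(G)| - |E(G - X)| = 6$.

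The key step is to show that for every induced 2-degenerate subgraph $S$ of $G - X$, the graph $G[S \cup \{v, u_1\}]$ is 2-degenerate; equivalently, that it has no nonempty $3$-core $C$. Using triangle-freeness (so that no vertex of $G$ can be adjacent to both $v$ and $u_1$), every $y \in V(C) \cap S$ satisfies $\deg_{C \cap S}(y) \ge \deg_C(y) - 1 \ge 2$, so $C \cap S$ is a subgraph of the 2-degenerate $G[S]$ with minimum degree at least $2$. Combining this with the planar triangle-free bound $|E(C)| \le 2|V(C)| - 4$, the absence of edges between $\{x_1, x_2, x_3\}$ and $\{u_2, u_3\}$, and Lemma~\ref{edges leaving cube} applied to any cube subgraph inside $C$, one can show that any nonempty $C$ must be isomorphic to a cube, and then either $vu_1$ is an edge of this cube (so $vu_1$ lies in a $4$-cycle of the cube, contradicting the assumption) or the cube gives a subgraph of $G$ with too few leaving edges, contradicting Lemma~\ref{edges leaving cube}.

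With this extensibility in hand, Lemma~\ref{induction lemma} gives $\lambda' \ge 5 \cdot 2 - 6 \cdot 2 + 6 + 1 = 5$. Since only five edges leave $X$ and each difficult component of $G - X$ is connected to $X$ by at least one edge, each uses exactly one. A singleton difficult component would need at least three edges to $X$ by Lemma~\ref{min degree three}, but the adjacency $v \sim u_1$ combined with triangle-freeness allows each vertex at most one neighbor in $\{v, u_1\}$, giving at most $2$ edges. An edge component $\{a, b\}$ would need at least four edges to $X$, which is similarly ruled out. A larger difficult component contains a cube block, which by Lemma~\ref{edges leaving cube} has at least six leaving edges in $G$; at most one can reach $X$, so at least five must go to other blocks of the component, and a tally of the block-tree structure of a difficult graph (cubes are vertex-disjoint, only edges and isolated vertices appear as other block types, and every vertex has degree $\ge 3$ in $G$) rules this out.

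The main obstacle is verifying the extensibility step. A direct 2-degeneracy ordering placing $v$ and $u_1$ after $S$ can produce backward degree $3$ at $u_1$, since $u_1$ has degree $4$ in $G$, so an ordering argument does not suffice and the $3$-core analysis is essential. One must carefully handle both the case $v \in V(C)$ (forcing $u_1, u_2, u_3 \in V(C)$) and $v \notin V(C)$ (forcing $x_1, x_2, x_3 \in V(C)$), and in each case combine all of the available hypotheses---triangle-freeness, planarity, the no-$4$-cycle assumption, and Lemma~\ref{edges leaving cube}---to force $C$ to be isomorphic to a cube subgraph and then obtain a contradiction.
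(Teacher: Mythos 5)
There is a genuine gap, and it is exactly at the step you flag as ``the main obstacle'': the claim that every induced $2$-degenerate subgraph $S$ of $G-\{v,u_1\}$ extends by adding \emph{both} $v$ and $u_1$. Your analysis of a hypothetical $3$-core $C$ of $G[S\cup\{v,u_1\}]$ does not close. First, nothing you cite forces $C$ to be isomorphic to a cube: $C$ is just an induced triangle-free planar subgraph of minimum degree at least $3$ whose deletion of at most two vertices is $2$-degenerate, and the edge count $|E(C)|\le 2|V(C)|-4$ together with Lemma~\ref{edges leaving cube} does not pin down its isomorphism type. Second, and more seriously, your concluding dichotomy fails even when $C$ \emph{is} a cube: take $C$ to be a cube containing $u_1$ and its three neighbors $x_1,x_2,x_3$ but not $v$. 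Then $C-u_1$ is a cube minus a vertex, which is $2$-degenerate and so may well lie inside $S$; the edge $vu_1$ is not an edge of $C$, so no $4$-cycle through $vu_1$ is produced; and $C$ can easily have six or more edges leaving (one to $v$ from $u_1$, plus one from each of several degree-$4$ vertices of $C$), so Lemma~\ref{edges leaving cube} is satisfied rather than violated. None of the previously established properties of a minimal counterexample excludes this configuration, so the extensibility claim with $A=2$ is not available, and without $A=2$ your application of Lemma~\ref{induction lemma} gives $\lambda'\ge 0$, which is vacuous. (Your endgame ruling out five difficult components each attached by one edge is fine in outline, but it never gets off the ground.)

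The paper takes a different route precisely to avoid this: instead of deleting $\{v,u\}$, it \emph{contracts} the edge $uv$ to a vertex $w$. The no-$4$-cycle hypothesis guarantees the contracted graph $G'$ is still triangle-free (this is where the hypothesis is really used), and since $G'$ has one fewer vertex and one fewer edge, it suffices to turn a maximum $2$-degenerate set $S$ of $G'$ into one of $G$ with $|S|+1$ vertices. That extension is a purely local degree argument about $w$ versus $u,v$ (any subset either avoids $\{u,v\}$, or inherits a low-degree vertex from $G'$, or has one of $u,v$ of degree at most two), and the leftover case $\lambda'>0$ collapses to $G'$ being a cube, which contradicts $\deg(v)=3$. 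If you want to rescue a deletion-based argument, you would need to either prove that no cube of $G$ meets $N(v)$ in the way described above, or allow the two added vertices to be chosen adaptively depending on $S$; as written, the proposal does not prove the claim.
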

\begin{proof}
  Suppose for a contradiction $u$ is a neighbor of $v$ such that the edge $uv$ is not contained in a cycle of length four.  Let $G'$ be the graph obtained from $G$ by contracting the edge $uv$ into a new vertex, say $w$, and observe that $G'$ is planar and triangle-free.

  Let $S\subseteq V(G')$ induce a maximum-size induced 2-degenerate subgraph of $G'$.  We claim that $G$ contains an induced 2-degenerate subgraph on at least $|S| + 1$ vertices.  If $w\notin S$, then $S\cup\{v\}$ induces a 2-degenerate subgraph of $G$ on at least $|S| + 1$ vertices, as claimed.  Therefore we may assume $w\in S$.  It suffices to show $S\setminus\{w\}\cup\{u,v\}$ induces a 2-degenerate subgraph in $G$.  Given $S'\subseteq S\setminus\{w\}\cup\{u,v\}$, we will show $G[S']$ contains a vertex of degree at most two.  If $S'\cap\{u,v\} = \varnothing$, then $G[S']$ equals $G'[S']$, which contains a vertex of degree at most two, as desired.  Therefore we may assume $S'\cap \{u,v\}\neq\varnothing$.  Note that $G'[S'\cup\{w\}\setminus\{u,v\}]$ contains a vertex $x$ of degree at most two.   If $x\neq w$, then since $G$ is triangle-free, $x$ is not adjacent to both $u$ and $v$,
  and thus $x$ has degree at most two in $G[S']$, as desired.  So we may assume $w$ has degree at most two in $G'[S'\cup\{w\}\setminus\{u,v\}]$.  Now at least one of $u$ and $v$ has degree at most two in $G[S']$, as desired.

  Since $G$ is a minimal counterexample and $|V(G')|<|V(G)|$, we have
  \begin{equation*}
    |S| \geq \frac{6|V(G')| - |E(G')| - \lambda'}{5} = \frac{6|V(G)| - |E(G)| - \lambda'}{5} - 1,
  \end{equation*}
  where $\lambda'$ is the number of difficult components of $G'$.
  Furthermore, $G$ contains an induced 2-degenerate subgraph on at least $|S| + 1$ vertices as argued, and thus
  \begin{equation*}
    |S| + 1 < \frac{6|V(G)| - |E(G)|}{5}.
  \end{equation*}
  
  It follows that $\lambda' > 0$.  Since $G'$ is connected, $G'$ is difficult.  By Lemmas~\ref{edges leaving cube} and \ref{min degree three},
  $G'$ cannot have an endblock not containing $w$, and thus $G'$ is isomorpic to the cube.  But then either $u$ or $v$ has degree at most two in $G$,
  which is a contradiction.
\end{proof}

We can now prove Lemma \ref{no special vertices}.
\begin{proof}[Proof of Lemma \ref{no special vertices}]
  Suppose for a contradiction that $G$ contains such a vertex $v$. By Claim \ref{3-vertex in two 4-cycles}, the vertex $v$ has a neighbor $u$ such that the edge $uv$ is contained in two cycles of length four.  Let $x_1$ and $x_2$ denote the other neighbors of $v$.  Since $uv$ is contained in two cycles of length four, for $i\in\{1,2\}$, $u$ and $x_i$ have a common neighbor $y_i$ that is distinct from $v$.  By Claims \ref{no adjacent 5-vertex} and \ref{no adjacent 3-vertex}, $u, x_1,$ and $x_2$ have degree four.  Since $v$ is not contained in a separating cycle of length four, $y_1\neq y_2$, $x_1$ and $y_2$ are not adjacent, and $x_2$ and $y_1$ are not adjacent.  By Claim \ref{3-vertex in two 4-cycles}, $y_1$ and $y_2$ have degree at least four.  Let $X = \{v, u, x_1, x_1, y_1, y_2\}$, and note that $|E(G)| - |E(G - X)| = 8+\deg(y_1)+\deg(y_2)$.  Note also that any induced 2-degenerate subgraph of $G-X$ can be extended to one of $G$ by adding $u, v, x_1,$ and $x_2$.  By Lemma \ref{induction lemma}, if $\lambda'$ is the number of difficult components of $G-X$, $\lambda' \ge \deg(y_1)+\deg(y_2)-7\ge 1$.
  Let $D$ be a difficult component of $G - X$ such that the number of edges between $D$ and $X$ is minimum.  Note that if $\deg(y_1)\geq 5$ or $\deg(y_2)\geq 5$, then $|E(V(D), X)| \leq 5$.  Otherwise, $|E(V(D), X)| \leq 9$.

  Since $G$ is triangle-free and $v$ is not contained in a separating cycle of length at most 5,
each vertex of $D$ has at most two neighbors in $X$, and if it has two, these neighbors are either $\{x_1,x_2\}$ or $\{y_1,y_2\}$.
By Claim~\ref{two 3-vertices in 4-cycle}, if $z$ is a leaf of $D$, we conclude that $z$ is adjacent to $y_1$ and $y_2$.
By planarity, $D$ has at most two leaves.  Furthermore, if $D$ had two leaves, then all edges between $D$ and $X$
would be incident with $y_1$ and $y_2$, and by planarity and absence of triangles, we would conclude that
$G$ contains a vertex of degree two or a cube subgraph with at most four edges leaving, which is a contradiction.
Hence, $D$ has an end-block $B$ isomorphic to the cube.  Label the vertices of $B$ according to Figure~\ref{cube figure}.  By Lemma~\ref{edges leaving cube}, $D$ has at most one end-block isomorphic to the cube.  Hence, either $D = B$, or $D$ has precisely two end-blocks, one of which is a leaf and one of which is $B$.

Suppose $\deg(y_1)\ge 5$ or $\deg(y_2)\ge 5$.  Then there are at most 5 edges between $X$ and $D$.  By Lemma~\ref{edges leaving cube}, $B\neq D$, so $D$ has at least two end-blocks.  Therefore there are at most $3$ edges between $B$ and $X$, so there are at most $4$ edges leaving $B$, contradicting Lemma~\ref{edges leaving cube}.  Hence, $\deg(y_1)=\deg(y_2)=4$.

By planarity, all edges between $B$ and $X$ are contained in
one face of $B$.  Since $G$ is triangle-free and $v$ is not contained in a separating $4$-cycle, there are at
most 3 edges between $B$ and $\{x_1,x_2\}$.
If $D$ has a leaf, then as we observed before, the leaf is adjacent to $y_1$ and $y_2$, and by planarity, all edges between $B$
and $X$ are incident with either $\{y_1,y_2,u\}$ or $\{x_1,x_2,y_1,y_2\}$.  By Lemma~\ref{edges leaving cube}, the former is not
possible, and in the latter case, there are $3$ edges between $B$ and $\{x_1,x_2\}$, both $y_1$ and $y_2$ have a neighbor in $B$,
and $D$ consists of $B$ and the leaf.  However, this is not possible, since $G$ is triangle-free.
Consequently, $D$ is isomorphic to the cube.

Let us now consider the case that $u$ has a neighbor in $V(D)$.  We may assume without loss of generality that $u$ is adjacent to $v_1$.  Since $v$ is not in a separating cycle of length at most five, $x_1$ and $x_2$ are not adjacent to $v_1, v_2,$ or $v_4$.  Therefore $x_1$ and $x_2$ each have at most one neighbor in $V(D)$.  By Lemma~\ref{edges leaving cube}, one of $y_1$ and $y_2$ has two neighbors in $V(D)$, and we may assume without loss of generality it is $y_1$.  Since $G$ is planar and triangle-free, $y_1$ is adjacent to $v_2$ and $v_4$, and $v_3$ is not adjacent to a vertex in $X$.  Therefore $x_1$ and $x_2$ have no neighbors in $V(D)$, so $|E(V(D), X)| \leq 5$, a contradiction.

Hence, we may assume $u$ has no neighbor in $V(D)$.  By Lemma~\ref{edges leaving cube}, at least two of the vertices $\{x_1, y_1, x_2, y_2\}$ have two neighbors in $V(D)$.  Suppose $x_1$ has two neighbors in $V(D)$.  Then $y_1$ and $y_2$ have at most one, since $x_1$ does not have a common neighbor with $y_1$ or $y_2$.  Therefore $x_2$ has two neighbors in $V(D)$.  Then $y_1$ and $y_2$ have no neighbors in $V(D)$, contradicting Lemma~\ref{edges leaving cube}. Therefore we may assume by symmetry that $y_1$ and $y_2$ have two neighbors in $V(D)$.  Then $x_1$ and $x_2$ have no neighbors in $V(D)$, again contradicting Lemma~\ref{edges leaving cube}.
\end{proof}


\subsection{Discharging}

In this section, we use discharging to prove the following.

\begin{lemma}\label{finding a special vertex}
  Every triangle-free plane graph with minimum degree three contains a vertex of degree three that is not contained in a separating cycle of length four or five.
\end{lemma}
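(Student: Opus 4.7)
The plan is to argue by contradiction via a discharging argument. Suppose that $G$ is a triangle-free plane graph of minimum degree three in which every vertex of degree three lies in a separating cycle of length four or five.

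Assign each vertex $v$ the initial charge $\deg(v)-4$ and each face $f$ the initial charge $\ell(f)-4$. Combining Euler's formula with the handshaking identity $\sum_v\deg(v)=\sum_f\ell(f)=2|E(G)|$ gives the total initial charge as exactly $-8$. Since $G$ is triangle-free, every face has length at least four, so every face and every vertex of degree at least four starts with non-negative charge; the only negative initial charges are the $-1$ at each vertex of degree three.

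The goal is to design redistribution rules so that every final charge is non-negative, contradicting the total of $-8$. A natural first rule has each face $f$ of length $\ell\ge 5$ send $(\ell-4)/\ell$ to each incident vertex, zeroing out face charges while leaving every vertex of degree at least four non-negative. For each vertex $v$ of degree three, I would then exploit the hypothesis to supply one unit of charge to $v$ by choosing an \emph{innermost} separating cycle $C_v$ of length four or five through $v$, meaning one whose interior contains no separating $4$- or $5$-cycle properly nested inside, and transferring one unit of charge from the minor side of $C_v$ to $v$. To locate that unit of excess, I would apply an Euler-type count to the subgraph induced by $V(C_v)$ together with the minor side of $C_v$; the outer face has length at most five, the interior vertices all have degree at least three in $G$, and by the innermost choice no shorter separating cycle is hidden inside, which forces the interior to contain enough positively charged vertices and faces to fund the transfer.

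The main obstacle is the bookkeeping when transfers for different vertices of degree three interact. Using the innermost choice, I would argue that the minor sides of the chosen cycles form a laminar family (any two are disjoint or one is properly contained in the other), so charge can be routed from innermost regions outward without double-counting. The quantitative heart of the argument is verifying that an innermost minor side really supplies the needed excess; this is likely carried out by a small case analysis, ruling out interiors of size one, two, and a few other small configurations via triangle-freeness and planarity (for example, a single interior vertex of degree at least three adjacent only to vertices on a $4$- or $5$-cycle immediately forces a triangle). Once all the rules are verified to leave every element with non-negative final charge, the contradiction with the total charge $-8$ completes the proof.
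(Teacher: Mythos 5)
Your high-level strategy (innermost separating short cycles plus discharging against the total charge $-8$) is in the same family as the paper's, but the quantitative heart of your proposal --- that the minor side of an innermost separating $4$- or $5$-cycle contains enough positive charge to pay one unit to each incident degree-three vertex --- is false, and this is a genuine gap rather than a detail to be checked. Let $C$ be a chordless separating $4$-cycle, let $I$ be the vertex set of its minor side, and let $H$ be the subgraph induced by $V(C)\cup I$. Applying Euler's formula to $H$ gives $\sum_{v\in V(H)}(\deg_H(v)-4)+\sum_{f}(|f|-4)=-8$; the outer face of $H$ has length $4$ and charge $0$, and each vertex of $C$ contributes $\deg_H(v)-4 = e_v - 2$ where $e_v$ is its number of edges into $I$. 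Solving, the total charge carried by the interior vertices together with all inner faces equals exactly $-e(C,I)\le -1$ (and for a $5$-cycle it equals $1-e(C,I)\le 0$). So the region you want to draw charge from is never a net source; it is a net sink. Any positively charged vertices or faces inside are outweighed by negative ones, and in the innermost region those negative vertices (interior degree-three vertices) have no deeper cycle to pay them either. The ``Euler-type count'' you invoke therefore cannot fund the transfers; what is actually needed is a structural argument showing that the interior of an innermost region is so constrained that the configuration cannot exist at all. Separately, your laminarity claim is asserted rather than proved: two innermost separating $4$- or $5$-cycles can a priori cross each other, and innermostness of each does not by itself prevent this, so the routing of charge ``from innermost regions outward'' needs an uncrossing argument you have not supplied.

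For contrast, the paper does not attempt to pay every degree-three vertex of $G$. It chooses a single separating cycle $C$ of length at most five minimizing the number of interior vertices over the whole graph, proves (Claim~\ref{3-vertex not in outer cycle}) that any degree-three vertex strictly inside $C$ must have exactly one neighbour on $C$, forces $|V(C)|=5$, and lies on a $5$-face meeting $C$ in a three-vertex subpath, and then runs a discharging argument confined to $C$ and its interior with modified boundary charges $\deg(v)-2$, so that the total is $-8+2|V(C)|\in\{0,2\}$ rather than negative. The contradiction is then structural: for $|V(C)|=4$ the all-tight case forces $C$ to have no edges into its nonempty interior, contradicting connectivity, and for $|V(C)|=5$ the parity of odd faces and the constrained positions of the two $5$-faces yield the contradiction. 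Those steps are the real content of the lemma and are absent from your outline.
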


For the remainder of this subsection, suppose $G$ is a counterexample to Lemma \ref{finding a special vertex}.  We assume $G$ is connected, or else we consider a component of $G$.
Since $G$ is planar and triangle-free, it contains a vertex of degree at most three, and thus $G$ contains a separating cycle of length at most five.  We choose a separating cycle $C$ of length at most five in $G$ so that the interior of $C$ contains the minimum number of vertices, and we let $H$ be the subgraph of $G$ induced by the vertices in $C$ and its interior.  Note that $C$ has no chords since $G$ is triangle-free.  By the choice of $C$, we have the following.

\begin{claim}\label{four-fifths-sep-cycle}
  The only separating cycle of $G$ of length at most five belonging to $H$ is $C$.
\end{claim}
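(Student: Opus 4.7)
The plan is to argue by contradiction, exploiting the minimality in the choice of $C$. Suppose $C' \subseteq H$ is a separating cycle of $G$ of length at most five with $C' \neq C$; I aim to exhibit a separating cycle of length at most five whose interior contains strictly fewer vertices of $G$ than the interior of $C$, contradicting the choice of $C$.

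The first step is combinatorial. Since $C$ has length at most five and $G$ is triangle-free, $C$ has no chords, so $C$ is the unique cycle of $G$ on the vertex set $V(C)$. Hence $C' \neq C$ forces $V(C') \not\subseteq V(C)$, and because $V(C') \subseteq V(H) = V(C) \cup V(\mathrm{int}(C))$, the cycle $C'$ must use at least one vertex strictly inside $C$.

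The second step is topological. Because $H$ lies in the closed disk bounded by $C$, the Jordan curve $C'$ lies in this closed disk, and so does its bounded interior region. Moreover, no point of $C$ can lie in the open interior of $C'$: such a point would admit a neighborhood inside that open region, but any neighborhood of a point of $C$ meets the exterior of $C$, which is disjoint from the closed disk of $C$. In particular, no vertex of $C$ lies strictly inside $C'$; combined with the fact that vertices of $C'$ itself are not strictly inside $C'$, the vertices of $G$ strictly inside $C'$ form a subset of $V(\mathrm{int}(C)) \setminus V(C')$. Therefore
\[
  |V(\mathrm{int}(C'))| \;\leq\; |V(\mathrm{int}(C))| - |V(C') \setminus V(C)| \;<\; |V(\mathrm{int}(C))|,
\]
contradicting the minimality of $|V(\mathrm{int}(C))|$.

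The main subtlety I expect to handle is the topological bookkeeping when $C'$ shares vertices or edges with $C$: I have to verify carefully that the bounded region of $C'$ still sits inside the closed disk of $C$ and that no interior point of $C'$ lies on $C$ itself. Once this is in place, the vertex count is immediate and the contradiction follows in one line.
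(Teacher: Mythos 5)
Your proof is correct and follows exactly the argument the paper intends: the claim is stated there without proof as an immediate consequence of choosing $C$ to minimize the number of interior vertices, and your write-up simply makes that minimality argument explicit (including the useful verification that no vertex of $C$ can lie strictly inside $C'$, which is needed for the vertex count to go through).
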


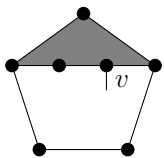
\begin{figure}
  \centering
  \begin{tikzpicture}
  \tikzstyle{every node}=[draw, fill, circle, scale = .5];
  \node (1) at (90 : 1) {};
  \node (2) at (90+72*1: 1) {};
  \node (3) at (90+72*2: 1) {};
  \node (4) at (90+72*3: 1) {};
  \node (5) at (90+72*4: 1) {};
  \draw (1) -- (2) -- (3) -- (4) -- (5) -- (1);
  \node (u) at ($(2)!.33!(5)$) {};
  \node[label=-45:{\huge $v$}] (v) at ($(2)!.66!(5)$) {};
  \draw (2) -- (u) -- (v) -- (5);
  \draw (v) -- ($(v) + (-90:.33)$);

  \begin{pgfonlayer}{bg}
      \fill[gray] (1.center) -- (2.center) -- (5.center);
  \end{pgfonlayer}
\end{tikzpicture}
  \caption{A vertex $v\in V(H)\setminus V(C)$ of degree three.}
  \label{3-vertex not in outer cycle figure}
\end{figure}

Now we need the following claim about vertices of degree three in the interior of $H$ (see Figure \ref{3-vertex not in outer cycle figure}).
\begin{claim}\label{3-vertex not in outer cycle}
  If some vertex $v\in V(H)\setminus V(C)$ has degree three, then $|V(C)| = 5$, and $v$ has precisely one neighbor in $V(C)$ and is incident to a face of length five whose boundary intersects $C$ in a subpath with three vertices.
\end{claim}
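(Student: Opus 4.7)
The plan is to exploit that $G$ is a counterexample to Lemma~\ref{finding a special vertex}, so $v$ lies on some separating cycle $C'$ of length four or five, and then use Claim~\ref{four-fifths-sep-cycle} repeatedly to pin down the shape of $C'$ until only the claimed configuration survives. If every vertex of $C'$ belonged to $V(H)$, Claim~\ref{four-fifths-sep-cycle} would give $C' = C$, contradicting $v \notin V(C)$; hence $C'$ has at least one vertex strictly outside $C$, and since no edge of $G$ can cross the closed curve $C$, the cycle $C'$ must meet $V(C)$ in at least two vertices. Writing $a$, $b$, $c$ for the numbers of vertices of $C'$ that lie strictly inside, on, and strictly outside $C$ respectively, we have $a \geq 1$, $b \geq 2$, $c \geq 1$, and $a+b+c = |C'| \in \{4,5\}$.

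The first main step is a case analysis that reduces the possible cyclic arrangements of $C'$ to a single one. In every configuration except $|C'|=5$ with $(a,b,c)=(2,2,1)$ and $C' = w_1 v v' w_2 y w_1$ (for some $w_1, w_2 \in V(C)$, $v'$ strictly inside $C$, and $y$ strictly outside $C$), both $C'$-neighbors of $v$ are forced onto $V(C)$, essentially because an edge of $C'$ with one endpoint inside $C$ and one outside is impossible, and an edge of $C'$ with both endpoints on $C$ must be an edge of $C$ itself since $C$ has no chord. To rule out $v$ having two neighbors on $C$, I would pick any such pair $u_1, u_2$: the two cycles obtained by closing the path $u_1 v u_2$ through each of the two arcs of $C$ both have length at most five and lie in $V(H)$, so Claim~\ref{four-fifths-sep-cycle} forces both to be non-separating, making $v$ the only vertex strictly inside $C$. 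Then $v$'s third neighbor also lies on $C$, giving three neighbors of $v$ on a cycle of length four or five; by pigeonhole two of them are consecutive on $C$, producing a triangle through $v$, a contradiction. Consequently $v$ has exactly one neighbor on $C$, namely $w_1$, and $C'$ has the displayed form, with $v$'s third neighbor $z$ strictly inside $C$.

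The final step pins down $|C|$ and the face. The outside subpath $w_1 y w_2$ together with an edge of $C$ joining $w_1$ and $w_2$ would form a triangle, so $w_1 w_2 \notin E(C)$; since $|C| \leq 5$ this forces $d_C(w_1, w_2) = 2$. Letting $x$ be the middle vertex of the shorter $w_1w_2$-arc of $C$, the cycle $C'' = w_1 v v' w_2 x w_1$ has length five, is contained in $V(H)$, and differs from $C$, so Claim~\ref{four-fifths-sep-cycle} forces $C''$ to be non-separating. Its unbounded side contains the exterior of $C$, so its bounded side has no vertex of $G$ and is therefore a face of $G$ of length five whose boundary meets $C$ in the three-vertex subpath $w_1 x w_2$. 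If $|C|$ were four, the analogous closing through the other $w_1w_2$-arc would give a second length-five cycle inside $V(H)$ distinct from $C$, and applying Claim~\ref{four-fifths-sep-cycle} to both would leave $v$ and $v'$ as the only vertices strictly inside $C$, contradicting the existence of $z$. Hence $|C| = 5$, and the face just produced is the one required.

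The main technical obstacle is the full configuration analysis of $C'$: one must carefully account for all cyclic orderings of the inside, on-cycle, and outside vertices of $C'$ (including those in which $C'$ shares an edge with $C$) and verify that every non-surviving arrangement indeed forces two neighbors of $v$ on $C$, after which the multi-$C$-neighbor argument closes out the case.
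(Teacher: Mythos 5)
Your proposal is correct and follows essentially the same route as the paper's proof: obtain a separating cycle $C'$ of length at most five through $v$, use Claim~\ref{four-fifths-sep-cycle} to force $C'$ out of $H$ and to show $v$ has exactly one neighbor on $C$, observe that $C'\cap C$ is a nonadjacent pair, and close the path $w_1vv'w_2$ through the short arc of $C$ to produce the length-five face while ruling out $|C|=4$. The only difference is that you spell out the configuration analysis and the pigeonhole/triangle argument that the paper leaves implicit.
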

\begin{proof}
  Suppose $v\in V(H)\setminus V(C)$ has degree three.  Since $G$ is a counterexample, $v$ is contained in a separating cycle $C'$ in $G$ of length four or five.  By Claim \ref{four-fifths-sep-cycle}, $C'$ is not contained in $H$, and since $C$ is chordless, $C'$ contains a vertex not in $V(H)$.  Since $C'$ has length at most five, $v$ has at least one neighbor in $V(C)$.  By Claim \ref{four-fifths-sep-cycle}, $v$ has at most one neighbor in $V(C)$.  Hence $v$ has precisely one neighbor in $V(C)$, as desired.  Note that $V(C)\cap V(C')$ is a pair of nonadjacent vertices, or else $G$  contains a triangle.  If $v$ is not incident to a face of length five containing three vertices of $C$, or if $|V(C)| = 4$, then $H$ contains a separating cycle of length at most five containing $v$, contradicting Claim \ref{four-fifths-sep-cycle}.
\end{proof}

\begin{proof}[Proof of Lemma \ref{finding a special vertex}]
  For each $v\in V(H)\setminus V(C)$, let $\initch(v) = \deg(v) - 4$, for each $v\in V(C)$, let $\initch(v) = \deg(v) - 2$, and for each face $f$, let $\initch(f) = |f| - 4$.  Note that by Euler's formula, if $F(H)$ denotes the set of faces of $H$,
  \begin{equation*}
    \sum_{v\in V(H)}\initch(v) + \sum_{f\in F(H)}\initch(f) = 4(|E(H)| - |V(H)| - |F(H)|) + 2|V(C)| = -8 + 2|V(C)|.
  \end{equation*}
  Now we redistribute the charges in the following way, and we denote the final charge $\finalch$.  For each $v\in V(C)$, if $u\in V(H)\setminus V(C)$ has degree three and is adjacent to $v$, let $v$ send one unit of charge to $u$.
  Note that by Claim \ref{3-vertex not in outer cycle}, for each $v\in V(H)$, $\finalch(v) \geq 0$.  Note also that for each $f\in F(H)$, $\finalch(f) \geq 0$.  The sum of charges is unchanged, i.e., it is $-8 + 2|V(C)|$.

First, suppose $|V(C)| = 4$, and thus the sum of the charges is $0$.  Note that every vertex and face has precisely zero final charge, so every face has length precisely four.  By Claim \ref{3-vertex not in outer cycle}, every vertex $v\in V(H)\setminus V(C)$ has degree precisely four.  Therefore every vertex in $C$ has degree precisely two.  Since $C$ is separating, $G$ is not connected, a contradiction.

Therefore we may assume $|V(C)| = 5$, so the sum of the charges is 2.  Note that the outer face $f$ has final charge $\finalch(f) = 1$.
Since $G$ has an even number of odd-length faces, it follows that $G$ has another face $f'$ of length $5$ and final charge $1$,
and all other faces and vertices have zero final charge.  In particular, all faces of $H$ distinct from $f$ and $f'$ have length $4$
and each vertex in $V(C)$ is adjacent only to vertices of degree three in $V(H)\setminus V(C)$.
Using Claim \ref{3-vertex not in outer cycle}, we conclude there are more than two vertices of $V(H)\setminus V(C)$ with a neighbor in $C$
and at least two faces of length at least five in the interior of $C$, a contradiction.
\end{proof}

Now the proof of Theorem \ref{4/5 bound} follows easily from Lemmas \ref{min degree three}, \ref{no special vertices}, and \ref{finding a special vertex}.



\section{Proof of Theorem \ref{real-bigO-thm}}\label{bounded-degree-three-section}

For the remainder of this section, let $G$ be a counterexample to Theorem \ref{real-bigO-thm} such that $\threefaces(G)$ is minimum, and subject to that, $|V(G)|$ is minimum, and let $F$ be a set of $\threefaces(G)$ faces of $G$ such that every vertex in $G$ of degree at most three is incident to at least one of them.

\subsection{Preliminaries}
\begin{lemma}\label{bigO-min-degree-three}
  The graph $G$ has minimum degree three.
\end{lemma}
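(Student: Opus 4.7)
My plan is to assume for contradiction that some vertex $v \in V(G)$ has degree at most two, and then argue that $G - v$ yields a contradiction --- either by being 2-degenerate itself (which forces $G$ to be 2-degenerate) or by violating the minimality of $G$ in the ordering used to choose it.

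First, I would observe that because $v$ has at most two neighbors, any induced 2-degenerate subgraph of $G - v$ remains 2-degenerate after adding $v$. If $G - v$ were itself 2-degenerate, then so would be $G$: every nonempty subgraph of $G$ either avoids $v$ (and is therefore a subgraph of $G - v$, hence has a vertex of degree at most two) or contains $v$ itself, whose degree in that subgraph is at most two. This contradicts $G$ being a counterexample to Theorem~\ref{real-bigO-thm}, so I may assume $G - v$ is not 2-degenerate.

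Next I would verify that $\threefaces(G - v) \leq \threefaces(G)$. Deleting $v$ from the plane embedding merges all faces of $G$ incident with $v$ into a single face $f^{\star}$ of $G - v$. Replacing every face of $F$ incident with $v$ by $f^{\star}$ yields a set $F'$ of faces of $G - v$ with $|F'| \leq \threefaces(G)$ that still covers every degree-at-most-three vertex of $G - v$: neighbors of $v$ lie on $f^{\star}$, while other degree-at-most-three vertices of $G - v$ were already of degree at most three in $G$ and are incident to some face of $F$, which is either unchanged in $F'$ or has been replaced by $f^{\star}$.

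Finally, since $|V(G - v)| < |V(G)|$ and $\threefaces(G - v) \leq \threefaces(G)$, the minimality of $G$ forces $G - v$ to satisfy the conclusion of Theorem~\ref{real-bigO-thm}. Combined with $G - v$ not being 2-degenerate, this gives $\degen_2(G - v) \geq \frac{7}{8}(n - 1) - \bigOconstant(\threefaces(G - v) - 2) \geq \frac{7}{8}(n - 1) - \bigOconstant(\threefaces(G) - 2)$. Extending an optimal such subgraph by $v$ yields
\begin{equation*}
  \degen_2(G) \geq \degen_2(G - v) + 1 \geq \frac{7}{8} n + \frac{1}{8} - \bigOconstant\bigl(\threefaces(G) - 2\bigr),
\end{equation*}
contradicting that $G$ is a counterexample. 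I do not anticipate any serious obstacle; the only step warranting a moment of care is the face-merging observation establishing $\threefaces(G - v) \leq \threefaces(G)$.
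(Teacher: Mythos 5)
Your proposal is correct and takes essentially the same route as the paper: delete a vertex $v$ of degree at most two, observe that $\threefaces(G-v)\le\threefaces(G)$ because a face of $F$ is incident with $v$ (your face-merging argument is just a more detailed version of the paper's one-line justification), invoke the minimality of $G$ to obtain a large induced $2$-degenerate subgraph of $G-v$, and add $v$ back. The only point left implicit is the trivial complementary bound that $G$ has a vertex of degree at most three (by planarity and triangle-freeness), so that the minimum degree is exactly three as the lemma states.
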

\begin{proof}
  Suppose not.  Since $G$ is planar and triangle-free, $G$ has minimum degree at most three.  Therefore we may assume $G$ contains a vertex $v$ of degree at most two.  By assumption, there is a face in $F$ incident with $v$.  Therefore $\threefaces(G - v) \leq \threefaces(G)$.  Note that $G-v$ is not 2-degenerate or else $G$ is.  By the minimality of $G$, there exists $S\subseteq V(G-v)$ of size at least $\frac{7}{8}(|V(G)| - 1) - \bigOconstant\left(\threefaces(G) - 2\right)$ such that $G[S]$ is 2-degenerate.  Now $S\cup\{v\}$ induces a 2-degenerate subgraph of $G$ on at least $\frac{7}{8}|V(G)| - \bigOconstant\left(\threefaces(G) - 2\right)$ vertices, contradicting that $G$ is a counterexample.
\end{proof}

\begin{lemma}\label{bigO-one-three-face}
  If $H$ is a triangle-free plane graph of minimum degree at least two such that $\threefaces(H) = 1$, then $H$ has at least four vertices of degree two.
\end{lemma}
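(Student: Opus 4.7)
The plan is to carry out a short Euler-formula double-count. I will let $f$ denote the single face in $F$, of length $\ell$, and write $n_2$ and $n_3$ for the numbers of vertices of $H$ of degree exactly two and three, respectively; by hypothesis, every such vertex lies on the boundary walk of $f$, and I must show $n_2 \geq 4$.

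First I will upper-bound $|E(H)|$ via face lengths. Since $H$ is simple and triangle-free with minimum degree at least two, every face other than $f$ has length at least four (a length-two face would force a pendant or multi-edge, and a length-three face would be a triangle). Plugging this into $2|E(H)| = \sum_{g} |g|$ and using Euler's formula (for a connected $H$; a disconnected $H$ only improves the bound) yields
\begin{equation*}
|E(H)| \leq 2|V(H)| - 2 - \tfrac{\ell}{2}.
\end{equation*}
On the other hand, summing degrees gives $2|E(H)| \geq 2n_2 + 3n_3 + 4(|V(H)| - n_2 - n_3)$, i.e.\ $|E(H)| \geq 2|V(H)| - n_2 - \tfrac{n_3}{2}$. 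Combining the two inequalities produces
\begin{equation*}
2n_2 + n_3 \geq \ell + 4.
\end{equation*}

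To close the argument, I will observe that each vertex of degree at most three appears at least once on the boundary walk of $f$, and the total number of vertex occurrences on that walk equals $\ell$, so $n_2 + n_3 \leq \ell$. Subtracting this from $2n_2 + n_3 \geq \ell + 4$ gives $n_2 \geq 4$, as required. The only subtlety I foresee is that the boundary of $f$ may not be a simple cycle if $H$ has cut vertices or bridges, but the argument uses only that every face other than $f$ has length at least four and that every vertex of degree at most three contributes at least one occurrence to $\ell$, both of which hold in full generality; so I do not expect this to cause any real difficulty.
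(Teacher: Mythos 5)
Your proof is correct and is essentially the paper's argument in different clothing: the paper runs the same computation as a discharging argument (charges $\deg(v)-4$ and $|g|-4$ summing to $-8$, with $f$ sending one unit to each incident vertex of degree at most three), which is exactly your combination of the face-length bound, the degree-sum bound, and the observation that $n_2+n_3\le \ell$. Both proofs hinge on the same two facts --- every face other than $f$ has length at least four, and all low-degree vertices fit on the boundary walk of $f$ --- so no further comparison is needed.
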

\begin{proof}
  Let $f'$ be a face of $H$ incident to all the vertices in $H$ of degree at most three.  We use a simple discharging argument.  For each vertex $v$, assign initial charge $\initch(v) = \deg(v) - 4$, and for each face $f$, assign initial charge $\initch(f) = |f| - 4$.  Now let $f'$ send one unit of charge to each vertex $v$ of degree at most three incident with $f'$, and denote the final charge $\finalch$.  By Euler's formula, the sum of the charges is $-8$.  However, $\finalch(f') \geq -4$, and every other face has nonnegative final charge.  Therefore the vertices have total final charge at most $-4$.  Every vertex of degree at least three has nonnegative final charge, and every vertex $v$ of degree two has final charge $-1$.  Therefore $H$ contains at least four vertices of degree two, as desired.
\end{proof}

Lemmas~\ref{bigO-min-degree-three} and \ref{bigO-one-three-face} imply that $\threefaces(G) > 1$.
A \emph{cylindrical grid} is the Cartesian product of a path and a cycle.

\begin{lemma}\label{bigO-two-three-faces}
  If $H$ is a triangle-free plane graph such that $\threefaces(H) = 2$, then either $H$ has minimum degree at most two, or $H$ is a cylindrical grid.
\end{lemma}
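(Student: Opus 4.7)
The plan is to adapt the discharging argument of Lemma~\ref{bigO-one-three-face} to the case of two special faces. We may assume $H$ has minimum degree at least three; combined with triangle-freeness this guarantees that $H$ has vertices of degree exactly three. Let $f_1, f_2$ be two faces that together cover all vertices of degree at most three. Assign initial charges $\initch(v) = \deg(v) - 4$ and $\initch(f) = |f| - 4$, whose total is $-8$ by Euler's formula, and let each of $f_1, f_2$ send one unit of charge to every incident degree-three vertex.

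After discharging, every vertex of degree at least four has nonnegative final charge; every degree-three vertex, being incident to at least one of $f_1, f_2$, has nonnegative final charge; and every face different from $f_1, f_2$, having length at least four by triangle-freeness, also has nonnegative final charge. Writing $d_i$ for the number of incidences between degree-three vertices and the boundary walk of $f_i$, we have $\finalch(f_i) = |f_i| - 4 - d_i \ge -4$. Since the charges still sum to $-8$, all of these inequalities must hold with equality. This forces: every vertex of degree at least four has degree exactly four; every face distinct from $f_1, f_2$ is a $4$-face; every degree-three vertex lies on exactly one of $f_1, f_2$; and every incidence with $\partial f_i$ is at a degree-three vertex. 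In particular, $\partial f_1$ and $\partial f_2$ are vertex-disjoint simple cycles $B_1$ and $B_2$, since any repeated vertex on a boundary walk would give $d_i < |f_i|$ and break equality. Connectedness of $H$ follows as well: a second component would have $\threefaces \le 1$ and thus, by Lemma~\ref{bigO-one-three-face}, a vertex of degree at most two.

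It remains to deduce that $H$ is isomorphic to the cylindrical grid $P_k \square C_n$, where $n = |B_1| = |B_2|$. Walk along $B_1 = v_1 v_2 \cdots v_n$: each $v_i$ has degree three, with two neighbors on $B_1$ and a unique further neighbor $w_i$. Since the face across each edge $v_i v_{i+1}$ from $f_1$ is a $4$-face, $w_i w_{i+1}$ is an edge; triangle-freeness together with degree four at each interior $w_i$ forces the $w_i$'s to be pairwise distinct and to form a cycle $B_1'$ of length $n$. Either $B_1' = B_2$, giving $H \cong P_2 \square C_n$, or every $w_i$ has degree four, in which case we iterate the argument with $B_1'$ replacing $B_1$. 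The peeling must terminate, and it can only terminate at $B_2$, producing a sequence of concentric $n$-cycles joined in grid fashion, so $H \cong P_k \square C_n$. The main technical obstacle is verifying at each step that the new layer is a simple cycle of length $n$, disjoint from all previous layers; this is forced by planarity, triangle-freeness, and the rigid local structure around each degree-four interior vertex, whose four incident faces are all $4$-faces.
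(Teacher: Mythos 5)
Your proof is correct and follows essentially the same route as the paper: the identical charge assignment $\initch(v)=\deg(v)-4$, $\initch(f)=|f|-4$ with the two covering faces discharging to their boundary vertices, forcing equality everywhere and hence the rigid structure (disjoint boundary cycles of degree-three vertices, all other vertices of degree four, all other faces of length four). The paper dismisses the final identification with ``it is easy to see that the only graphs with these properties are cylindrical grids,'' whereas you sketch the layer-peeling explicitly; the minor difference in the discharging rule (sending charge only to degree-three boundary vertices rather than to all boundary vertices) is immaterial.
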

\begin{proof}
Let $H$ be a triangle-free plane graph of minimum degree three such that $\threefaces(H) = 2$.  It suffices to show that $H$ is a cylindrical grid.  Let $f_1$ and $f_2$ be faces of $H$ such that every vertex of degree at most three is incident to either $f_1$ or $f_2$.  Again we use a simple discharging argument.  For each vertex $v$, assign initial charge $\initch(v) = \deg(v) - 4$, and for each face $f$, assign initial charge $\initch(f) = |f| - 4$.  Now for $i\in\{1, 2\}$, let $f_i$ send one unit of charge to each vertex $v$ incident to $f_i$, and denote the final charge $\finalch$.  By Euler's formula, the sum of the charges is $-8$.  However, $\finalch(f_1), \finalch(f_2)\geq -4$, and every other face and every vertex has nonnegative final charge.  It follows that $\finalch(f_1) = \finalch(f_2) = -4$, and that every other face and every vertex has precisely zero final charge.  Therefore the boundaries of $f_1$ and $f_2$ are disjoint, and every vertex incident with either $f_1$ or $f_2$ has degree three.  Every other vertex has degree four, and every face that is not $f_1$ or $f_2$ has length four.  It is easy to see that the only graphs with these properties are cylindrical grids, as desired.
\end{proof}

\begin{lemma}\label{bigO-cylindrical-grid}
  A triangle-free cylindrical grid on $n$ vertices contains an induced 2-degenerate subgraph on at least $\frac{7}{8}n$ vertices.
\end{lemma}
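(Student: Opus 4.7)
The plan is to exhibit an explicit set $D \subseteq V(G)$ of size at most $n/8$ whose deletion leaves a 2-degenerate graph. Write $G = P_k \Box C_\ell$; since $G$ is triangle-free the cycle factor must satisfy $\ell \geq 4$. I would take $D$ to be every second vertex of a single designated column of the cylinder: fixing column $0$, let $D = \{(i,0) : 1 \leq i \leq k, \ i \text{ even}\}$, so $|D| = \lfloor k/2 \rfloor$.

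The size check is immediate. The desired bound $|V(G) \setminus D| \geq \tfrac{7}{8}n$ reduces to $\lfloor k/2 \rfloor \leq k\ell/8$, and since $\lfloor k/2 \rfloor \leq k/2$ this is equivalent to $\ell \geq 4$. Notice the bound is tight precisely when $\ell = 4$ and $k$ is even, which is the cube-tiling construction.

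For the 2-degeneracy of $G - D$, I would produce a 2-degeneracy ordering of the remaining vertices. First list the vertices of columns $1, 2, \ldots, \ell-1$ in row-major order; these induce the rectangular grid $P_k \Box P_{\ell-1}$, and in this order each vertex $(i,j)$ has at most two earlier neighbors (the one directly above, $(i{-}1,j)$, and the one immediately to its left, $(i,j{-}1)$). Then append the surviving column-$0$ vertices (those with $i$ odd). For each such $(i,0)$, the potential column-$0$ neighbors $(i\pm 1, 0)$ lie in $D$ (or are out of range at the boundary $i=1$ or $i=k$), so its only surviving neighbors are $(i,1)$ and $(i,\ell-1)$, both of which appeared in the grid phase. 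Thus every vertex has at most two earlier neighbors, which witnesses 2-degeneracy.

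I do not anticipate a significant obstacle. The only subtle point is calibrating the deletion set: deleting an entire column would cost $k$ vertices, exceeding the $n/8$ budget when $\ell < 8$, whereas deleting an independent half of a column costs only $\lfloor k/2 \rfloor \leq k\ell/8$ and suffices to turn the cyclic structure into a rectangular grid with degree-two pendants, which is manifestly 2-degenerate.
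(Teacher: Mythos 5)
Your proof is correct and is essentially the paper's argument: the paper likewise deletes $\lfloor k/2\rfloor$ vertices, one from every second of the $k$ rings (copies of $C_\ell$), and observes that the remainder is 2-degenerate, so your deletion set is just the special case in which all deleted vertices are aligned in a single column. The only difference is that you spell out the explicit 2-degeneracy ordering, which the paper leaves to the reader.
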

\begin{proof}
  Let $H$ be a triangle-free cylindrical grid on $n$ vertices.  The vertices of $H$ can be partitioned into $k$ sets that induce cycles $C_1, \dots, C_k$ of equal length such that for each $i\in\{2, \dots, k-1\}$, every vertex in $C_i$ has a unique neighbor in $C_{i-1}$ and in $C_{i+1}$.  Let $X$ be any set of vertices containing precisely one vertex in $C_{2i}$ for each $i\in\{1, \dots, \lfloor k/2\rfloor\}$.  Note that $H - X$ is an induced 2-degenerate subgraph on at least $\frac{7}{8}n$ vertices, as desired.
\end{proof}

By Lemmas \ref{bigO-two-three-faces} and \ref{bigO-cylindrical-grid}, we have $\threefaces(G) > 2$.

\begin{define}
  We say a subset of the plane is \textit{$G$-normal} if it intersects $G$ only in vertices.  If $f$ and $f'$ are faces of $G$, we define $d(f,f')$
to be the smallest number of vertices contained in a $G$-normal curve with one end in $f$ and the other end in $f'$.  If $f$ is a face of $G$ and $v$ is a vertex of $G$, we define $d(f,v)$ to be the minimum of $d(f,f')$ over all faces $f'$ incident with $v$.
\end{define}

\begin{lemma}\label{bigO-no-small-separator}
  Let $P$ be a $G$-normal connected subset of the plane that intersects a face in $F$ or its boundary.
  Let $X$ be the set of vertices of $G$ contained in $P$.  Suppose that $H_1$ and $H_2$ are disjoint induced subgraphs of $G - X$ such that $G - X = H_1\cup H_2$.  If $\threefaces(H_1)\ge 2$ and $\threefaces(H_2) \geq 2$, then $|X| \geq 21$.
\end{lemma}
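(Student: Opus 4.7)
The plan is to assume for contradiction that $|X| \le 20$ and derive a contradiction by producing a 2-degenerate induced subgraph of $G$ that is too large for $G$ to be a counterexample. The heart of the argument is a face-cover bound of the form $\threefaces(H_1) + \threefaces(H_2) \le \threefaces(G) + 1$; once that is in place, the minimality of $\threefaces(G)$ will force each $H_i$ to satisfy Theorem~\ref{real-bigO-thm}, and combining the two resulting 2-degenerate subgraphs will beat the counterexample bound.

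To prove the face-cover bound, let $f^* \in F$ be a face of $G$ that $P$ intersects or whose boundary $P$ intersects. Since $P$ is $G$-normal, every point of $P \cap \partial f^*$ is a vertex of $G$ and hence lies in $X$; combining this with the connectivity of $P$ and the fact that face interiors contain no vertices, we get $X \cap \partial f^* \ne \varnothing$, so $f^*$ is \emph{disturbed} in the sense that its boundary meets $X$. On the other hand, because $V(H_i) \cap X = \varnothing$ and $P$ meets no edge of $G$, the set $P$ is disjoint from the drawing of $H_i$ in the plane; the complement of that drawing decomposes into its disjoint open face interiors, and the connectivity of $P$ forces $P$ to lie inside a single such face of $H_i$, which I call $b_i$.

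Let $F_i' = \{f \in F : \partial f \cap X = \varnothing \text{ and } \partial f \subseteq V(H_i)\}$; each such $f$ remains a face of $H_i$. I claim that $F_i' \cup \{b_i\}$ covers every vertex of $H_i$ of degree at most three. Indeed, given such a vertex $v$, either $\deg_G(v) \le 3$, in which case $v \in \partial f$ for some $f \in F$ -- and if $f \in F_i'$ we are done, while if $f$ is disturbed then a neighborhood of any $x \in X \cap \partial f$ lies in $b_i$, forcing the interior of $f$ into $b_i$ and hence $v \in \partial b_i$ -- or else $v$ lost a neighbor $x \in X$ in passing from $G$ to $H_i$, and then a neighborhood of $x$ lies in $b_i$, again putting $v$ on $\partial b_i$. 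Because $X$ separates $V(H_1)$ from $V(H_2)$, the boundary of an undisturbed face of $F$ lies entirely on one side, so $F_1'$ and $F_2'$ are disjoint and neither contains $f^*$; hence $|F_1'| + |F_2'| \le \threefaces(G) - 1$, giving $\threefaces(H_1) + \threefaces(H_2) \le \threefaces(G) + 1$. Combined with $\threefaces(H_i) \ge 2$ for both $i$, this yields $\threefaces(H_i) < \threefaces(G)$, so by the minimality of $\threefaces(G)$ each $H_i$ satisfies Theorem~\ref{real-bigO-thm} and contains an induced 2-degenerate subgraph $S_i$ with $|S_i| \ge \tfrac{7}{8}|V(H_i)| - \bigOconstant(\threefaces(H_i) - 2)$ (taking $S_i = V(H_i)$ if $H_i$ is 2-degenerate).

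Since there are no edges of $G$ between $V(H_1)$ and $V(H_2)$, the set $S_1 \cup S_2$ induces a 2-degenerate subgraph of $G$, and summing the two lower bounds while using $|V(H_1)| + |V(H_2)| = |V(G)| - |X|$ together with the face-cover inequality gives
\[ |S_1| + |S_2| \ge \tfrac{7}{8}|V(G)| - \bigOconstant(\threefaces(G) - 2) + \bigOconstant - \tfrac{7}{8}|X|. \]
Provided the constant $\bigOconstant$ is chosen large enough that $\bigOconstant \ge \tfrac{7}{8} \cdot 20$, the assumption $|X| \le 20$ makes the last two terms nonnegative and produces a 2-degenerate induced subgraph of $G$ contradicting that $G$ is a counterexample. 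The main obstacle is the planar topology at the heart of the face-cover step: one must use the $G$-normality and connectivity of $P$ carefully to show that $P$ lies in a single face $b_i$ of $H_i$, and that every vertex of $H_i$ affected by removing $X$ -- whether already of degree at most three on a disturbed face of $F$, or having become of degree at most three by losing a neighbor in $X$ -- lies on $\partial b_i$.
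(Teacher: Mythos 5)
Your proposal is correct and follows essentially the same route as the paper's proof: establish $\threefaces(H_1)+\threefaces(H_2)\le\threefaces(G)+1$ by observing that $P$ lies in a single face of $G-X$ which absorbs the disturbed face of $F$ and all newly low-degree vertices, then apply the minimality of $G$ to each $H_i$ and combine the two $2$-degenerate sets to force $\tfrac{7}{8}|X|>\bigOconstant$. The only difference is that you spell out the topological face-cover step in more detail than the paper does.
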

\begin{proof}
 Note that there is a face of $G - X$ containing $P$ in its interior, and any vertex of $G - X$ of degree at most three that has degree at least four in $G$ is incident with this face.  Therefore $\threefaces(H_1) + \threefaces(H_2) \leq \threefaces(G) + 1$.  By the minimality of $G$, for each $i\in\{1, 2\}$, there exists $S_i\subseteq V(H_i)$ of size at least $\frac{7}{8}|V(H_i)| - \bigOconstant(\threefaces(H_i) - 2)$ such that $G[S_i]$ is 2-degenerate.  But $G[S_1\cup S_2]$ is 2-degenerate, and
  \begin{align*}
    |S_1\cup S_2| &\geq \frac{7}{8}\left(|V(G)| - |X|\right) - \bigOconstant(\threefaces(H_1) + \threefaces(H_2) - 4)\\
                  &\geq \frac{7}{8}|V(G)| - \bigOconstant(\threefaces(G) - 2) - \frac{7}{8}|X| + \bigOconstant.
  \end{align*}
  Since $G$ is a counterexample, $\frac{7}{8}|X| > \bigOconstant$, so $|X| \geq 21$, as desired.
\end{proof}

Note that Lemma \ref{bigO-no-small-separator} together with Lemmas~\ref{bigO-min-degree-three} and \ref{bigO-one-three-face} imply that $G$ is connected.

\begin{lemma}\label{bigO-three-faces-far-apart}
  All distinct faces $f, f'\in F$ satisfy $d(f,f')\geq 21$.
\end{lemma}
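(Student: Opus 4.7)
The plan is to assume, for contradiction, that $d(f, f') \leq 20$ for some distinct $f, f' \in F$, and then apply Theorem \ref{real-bigO-thm} to $G - X$, where $X$ is the vertex set of a shortest witnessing curve, to produce an induced 2-degenerate subgraph of $G$ that is too large for $G$ to be a counterexample.

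Concretely, I would fix a $G$-normal curve $\gamma$ from an interior point of $f$ to an interior point of $f'$ with $X := V(\gamma) \cap V(G)$ of size at most $20$. The key step is to show that $\threefaces(G - X) \leq \threefaces(G) - 1$. In the plane embedding inherited by $G - X$, the curve $\gamma$ lies inside a single face $f^*$, formed by merging every face of $G$ whose boundary contains a vertex of $X$; in particular, both $f$ and $f'$ are absorbed into $f^*$. For each $f''' \in F$, the vertices on the boundary of $f'''$ in $G$ that survive in $G - X$ are all incident to a single face of $G - X$, namely the face whose interior contains the open region $f'''$. Taking one such face per element of $F$ yields a collection of at most $|F| - 1$ distinct faces of $G - X$ (the contributions from $f$ and $f'$ both coincide with $f^*$) covering every vertex that was already of degree at most three in $G$. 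Any new low-degree vertex of $G - X$ has lost a neighbor to $X$, hence lies on the boundary of a face of $G$ incident to a vertex of $\gamma$, and so is incident to $f^*$ in $G - X$; thus the same collection still covers.

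Since $\threefaces(G - X) < \threefaces(G)$, the minimality of $G$ forces $G - X$ to satisfy Theorem \ref{real-bigO-thm}, so either $G - X$ is itself 2-degenerate (yielding an induced 2-degenerate subgraph of $G$ on at least $|V(G)| - 20$ vertices), or $G - X$ contains an induced 2-degenerate subgraph of size at least $\frac{7}{8}(|V(G)| - 20) - \bigOconstant(\threefaces(G) - 3)$. In either case, provided $\bigOconstant$ has been fixed sufficiently large (concretely, $\bigOconstant \geq 20$ suffices), a direct computation shows that this size strictly exceeds the upper bound $\frac{7}{8}|V(G)| - \bigOconstant(\threefaces(G) - 2)$ forced by $G$ being a counterexample, a contradiction. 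The main obstacle will be the topological accounting in the key step: verifying cleanly that removing the at most $20$ vertices on the connected curve $\gamma$ does consolidate all affected faces of $G$ into the single face $f^*$ of $G - X$, and that every vertex that newly acquires degree at most three is incident to $f^*$.
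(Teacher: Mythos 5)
Your proposal is correct and follows essentially the same route as the paper: delete the at most $20$ vertices on a shortest $G$-normal curve joining $f$ and $f'$, observe that this merges $f$ and $f'$ into one face so that $\threefaces(G-X)\le\threefaces(G)-1$, and then apply the minimality of $G$ (splitting into the cases where $G-X$ is or is not $2$-degenerate) to obtain an induced $2$-degenerate subgraph that is too large. The only difference is presentational: you spell out the topological bookkeeping for why $\threefaces$ drops and why newly low-degree vertices are covered, which the paper leaves implicit, and your sufficient condition $\bigOconstant\ge 20$ is slightly looser than the paper's actual constant $18$, which the same computation also supports.
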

\begin{proof}
  Suppose not.  Then there is a set $X$ of at most $20$ vertices such that $f$ and $f'$ are contained in the same face of $G - X$.  Therefore $\threefaces(G - X) \leq \threefaces(G) - 1$.  

Let $n=|V(G)|$.  Recall that $\threefaces(G)\ge 3$, and thus $n>20$, as otherwise the empty subgraph satisfies the requirements of Theorem~\ref{real-bigO-thm}.
Note that $G - X$ is not 2-degenerate or else $G - X$ is an induced 2-degenerate subgraph on at least $n - 20 \geq \frac{7}{8}n - \bigOconstant(\threefaces(G) - 2)$ vertices, contradicting that $G$ is a counterexample.  So by the minimality of $G$, there exists $S\subseteq V(G-X)$ of size at least $\frac{7}{8}(|V(G)| - |X|) - \bigOconstant\left(\threefaces(G - X) - 2\right) \geq \frac{7}{8}|V(G)| - \bigOconstant\left(\threefaces(G) - 2\right)$ such that $G[S]$ is 2-degenerate, contradicting that $G$ is a counterexample.
\end{proof}

\begin{lemma}\label{layering-lemma}
  For each $f\in F$ and $k\in\{0, \dots, 9\}$, if $C_k=\{v\in V(G):d(f,v)=k\}$, then $C_k$ induces a cycle in $G$.  Furthermore, every vertex in $C_k$ has at most one neighbor $u$ satisfying $d(f,u)<k$.
\end{lemma}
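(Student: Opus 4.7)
The plan is to prove both conclusions simultaneously by induction on $k$, strengthening the hypothesis so that the cycles $C_0, C_1, \ldots, C_k$ bound pairwise-nested closed topological disks $D_0 \subset D_1 \subset \cdots \subset D_k$ with $f \subset D_0$, each annulus between $C_{j-1}$ and $C_j$ meeting $G$ only in the vertices of $C_j$ on its outer boundary together with a ring of faces of $G$ squeezed between $C_{j-1}$ and $C_j$. The base case $k=0$ amounts to showing $\partial f$ is a chordless simple cycle with $D_0 = \overline{f}$.

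Both the base case and the inductive step would be argued by contradiction. If the conclusion fails at layer $k$, the failure is one of a small number of local types: either $\partial f$ has a repeated vertex or a chord (at $k=0$), or some $v \in C_k$ has two neighbors $u_1, u_2$ with $d(f,u_i) < k$, or $G[C_k]$ fails to be a single chordless cycle. In each case I would build a $G$-normal closed curve $\gamma$ by splicing two shortest $G$-normal curves from $f$ to the inner-layer witnesses of the failure (each passing through at most $k-1$ vertices by the inductive hypothesis) with at most three additional witness vertices in layer $\le k$. Consequently $\gamma$ passes through at most $2(k-1) + 3 = 2k+1 \le 19$ vertices, and $\gamma$ lies in a $G$-normal connected subset of the plane meeting $f \in F$.

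Setting $X = V(\gamma)$, the closed curve $\gamma$ separates the plane, so $G - X = H_1 \cup H_2$ where $H_1$ is the inner side (containing $C_0 \cup \cdots \cup C_{k-1}$) and $H_2$ is the outer side. Lemma~\ref{bigO-no-small-separator} then forces $|X| \ge 21$, contradicting $|X| \le 19$, provided $\threefaces(H_1), \threefaces(H_2) \ge 2$. For $H_2$ this uses Lemma~\ref{bigO-three-faces-far-apart}: the at least two faces of $F \setminus \{f\}$ (recall $|F| = \threefaces(G) \ge 3$) lie at pairwise distance $\ge 21$ from $f$ and from each other, so with $|X| < 21$ they survive as distinct faces of $H_2$ and their low-degree vertices cannot all be covered by a single face of $H_2$. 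The main obstacle I anticipate is the analogous verification for $H_1$: $f$ remains a face of $H_1$ and could a priori cover all of its low-degree vertices on its own. Ruling this out requires combining the inductive structure of $C_0, \ldots, C_{k-1}$, Lemma~\ref{bigO-min-degree-three} on minimum degree, and the rigidity consequences of $\threefaces(H_1) \le 1$ furnished by Lemmas~\ref{bigO-one-three-face} and~\ref{bigO-two-three-faces}, together with the fact that the triangle-freeness of $G$ severely restricts how such a thin inner region can sit.
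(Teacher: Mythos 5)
Your high-level strategy---induction on $k$, a short $G$-normal closed curve through $f$ witnessing the failure, and an appeal to Lemma~\ref{bigO-no-small-separator} with the count $2k+1\le 19$---matches the paper's. But the gap you flag at the end is genuine and is not a technicality that Lemmas~\ref{bigO-one-three-face} and~\ref{bigO-two-three-faces} will absorb. First, your topological picture is off: a closed curve obtained by splicing two radial $G$-normal arcs from $f$ to the witnesses crosses each of $C_0,\dots,C_{k-1}$ in one vertex per arm, so it does \emph{not} separate an ``inner side containing $C_0\cup\dots\cup C_{k-1}$'' from an ``outer side''; it separates the two lobes into which the offending configuration (say the path $v_1vv_2$ for $v\in C_k$ with two neighbours $v_1,v_2\in C_{k-1}$) cuts the disk bounded by $C_{k-1}$. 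Consequently your argument for $H_2$ also breaks: there is no reason both remaining faces of $F$ land on the same side of the cut, and the side consisting essentially of one small lobe may contain none of them. Second, even granting your inner/outer reading, the inner region can genuinely satisfy $\threefaces=1$ (for $k=1$ it is little more than an arc of $C_0$), so the verification you defer for $H_1$ cannot be completed as stated.

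The missing idea is to manufacture a face of $F$ inside \emph{each} lobe before cutting. Let $P_1,P_2$ be the two arcs of the cycle $G[C_{k-1}]$ between $v_1$ and $v_2$ (each of length at least two by triangle-freeness). The subgraph of $G$ drawn in the closed region bounded by $P_i+v_1vv_2$ has minimum degree at least two and at most three vertices of degree two ($v_1$, $v$, $v_2$), so by Lemma~\ref{bigO-one-three-face} it cannot have all of its low-degree vertices on a single face; since every vertex of degree three in $G$ lies on a face of $F$, this forces a face $f_i\in F$ strictly inside the $i$-th lobe. These two faces $f_1,f_2$---not the faraway members of $F$---are what end up on opposite sides of the radial separator, and Lemma~\ref{bigO-three-faces-far-apart} guarantees they are untouched by $X$, so each side has one face needed for $f_i$'s degree-three vertices and another for the degree-at-most-two vertices created along the cut, giving $\threefaces(H_i)\ge 2$ on both sides. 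The same device (applied to end-blocks and to the regions cut off by a chord, which have at most one, respectively two, vertices of degree two) is what closes the connectivity, cut-vertex, and chordlessness cases; without it the induction does not close.
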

\begin{proof}
  We assume without loss of generality that $f$ is the outer face of $G$.  We use induction on $k$.  In the base case, $C_0$ is the set of vertices incident with $f$.  We prove this case as a special case of the inductive step.

  By induction, we assume that for each $k' < k$, $C_{k'}$ induces a cycle in $G$ and each vertex $C_{k'}$ has at most one neighbor in $C_{k'-1}$.  Let $H = G - \bigcup_{k'=0}^{k-1} C_{k'}$.  Note that $C_k$ is the set of vertices incident with the outer face of $H$.  By Lemma \ref{bigO-three-faces-far-apart}, if $k>0$ then every vertex of $C_k$ has degree at least four in $G$.

  First we show that every $v\in C_k$ has at most one neighbor in $C_{k-1}$.  Here the base case is trivial, so we may assume $k > 0$.  Suppose for a contradiction that a vertex $v\in C_k$ has two neighbors $v_1$ and $v_2$ in $C_{k - 1}$.  Let $P_1$ and $P_2$ be the two paths in the cycle $G[C_{k-1}]$ with ends $v_1$ and $v_2$.  Since $G$ is triangle-free, $P_1$ and $P_2$ have length at least two.
For $i\in \{1,2\}$, note that the subgraph of $G$ drawn in the closure of the interior of the cycle $P_i + v_1vv_2$ has minimum degree at least two and at most three vertices ($v_1$, $v$, and $v_2$) of degree two.  Therefore by Lemma \ref{bigO-one-three-face}, it contains a face $f_i\in F$.

For $i\in\{1,2\}$, there exists a simple $G$-normal curve $A_i$ from $v_i$ to $f$ containing exactly one vertex from $C_{k'}$ for each $k'<k$.
Let $X$ consist of the vertices on $A_1$ and $A_2$ together with $v$, and note that $|X|\le 19$.  Let $G-X=H_1\cup H_2$, where $f_1$ is a face of $H_1$
and $f_2$ is a face of $H_2$---neither $f_1$ nor $f_2$ is incident with a vertex of $X$ by Lemma~\ref{bigO-three-faces-far-apart}, and for the
same reason the vertices in $H_i$ incident with $f_i$ have degree at least three for $i\in\{1,2\}$.
By Lemma~\ref{bigO-one-three-face}, for $i\in\{1, 2\}$ we have either $\threefaces(H_i)\ge 2$ or $H_i$ contains vertices of degree at most two.  In the latter case, the vertices of degree at most two in $H_i$ are incident with the outer face, and thus $\threefaces(H_i) \geq 2$.  This contradicts Lemma \ref{bigO-no-small-separator}.
Therefore every vertex of $C_k$ has at most one neighbor in $C_{k-1}$, as claimed.  Note that this implies every vertex of $C_k$ has degree at least three in $H$.
  
  Now we claim that $H$ is connected and $C_k$ does not contain a cut-vertex of $H$.  Suppose not.  Then $H$ contains at least two end-blocks $B_1$ and $B_2$.  Note that $B_1$ and $B_2$ have minimum degree at least two and at most one vertex of degree two.  Therefore by Lemma \ref{bigO-one-three-face}, $\threefaces(B_1), \threefaces(B_2) \geq 2$.  But there is a connected $G$-normal subset of the plane intersecting $G$ in a set of vertices $X$ containing only one vertex of $H$ and at most two vertices from each $C_{k'}$ for $k' < k$ such that $B_1 - X$ and $B_2 - X$ are in different components of $G - X$.  Note that $|X|\le 19$.  By Lemma \ref{bigO-three-faces-far-apart}, $\threefaces(B_1 - X), \threefaces(B_2 - X) \geq 2$, contradicting Lemma \ref{bigO-no-small-separator}.  Hence $H$ is connected, and $C_k$ does not contain a cut-vertex of $H$, as claimed.

  Since $C_k$ does not contain a cut-vertex of $H$, the outer face of $H$ is bounded by a cycle, say $C$.  Now if $C_k$ does not induce a cycle in $G$, then there is a chord of $C$, say $uv$.  Let $P_1$ and $P_2$ be paths in $C$ with ends at $u$ and $v$ such that $C = P_1\cup P_2$.  For $i\in\{1,2\}$, let $H_i$ be the graph induced by $G$ on the vertices in $P_i\cup uv$ and its interior.  Since $H_i$ has minimum degree two and at most two vertices of degree two, by Lemma \ref{bigO-one-three-face}, $\threefaces(H_i)\geq 2$.  But there is a connected $G$-normal subset of the plane containing $u$, $v$, and intersecting $G$ in a set of vertices $X$ containing at most two vertices from each $C_{k'}$ for $k' \leq k$.  Note that $|X| \leq 20$.  By Lemma \ref{bigO-three-faces-far-apart}, $\threefaces(H_1 - X), \threefaces(H_2 - X) \geq 2$, contradicting Lemma \ref{bigO-no-small-separator}.
\end{proof}

Consider a face $f\in F$ and for $k\in\{0, \dots, 9\}$, let $C_k$ be the cycle induced by $\{v\in V(G):d(f,v)=k\}$ according to Lemma~\ref{layering-lemma}.
For $k\in\{0,\ldots,8\}$ and $v\in V(C_k)$, let $n(v)$ denote the number of neighbors of $v$ in $C_{k+1}$ (note that $n(v)\ge 1$)
and $n(f,k)=\sum_{v\in V(C_k)} (n(v)-1)$.  Let $g(f,k)$ be the sum of $|f'|-4$ over all faces $f'$ such that $d(f,f')=k+1$, i.e., the faces
between cycles $C_k$ and $C_{k+1}$.  Let $b_k=3$ if $k=0$ and $b_k=4$ otherwise,
and let $c(f,k)=\sum_{v\in V(C_k)} (\deg(v)-b_k)$.  Let us also define $n(f,-1)=g(f,-1)=0$.
Observe that
$$|C_{k+1}|=|C_k|+2n(f,k)+g(f,k),$$
and 
$$n(f,k)=n(f,k-1)+g(f,k-1)+c(f,k).$$
Consequently,
$$n(f,k)=\sum_{k'=0}^k c(f,k')+\sum_{k'=0}^{k-1} g(f,k').$$

The following lemma will be crucial.
\begin{lemma}\label{lemma-degeneq}
  For every $f\in F$,
  \begin{equation*}
    8\sum_{k=0}^{9} n(f,k)\geq 249.
  \end{equation*}
\end{lemma}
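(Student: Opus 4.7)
My plan is to prove Lemma~\ref{lemma-degeneq} by applying a discharging/Euler argument to the plane subgraph $H$ formed by the ten nested cycles $C_0,\ldots,C_9$ around $f$. Set $V(H) = \bigcup_{k=0}^{9} V(C_k)$, let $H$ inherit its embedding from $G$, and observe that the faces of $H$ are $f$ itself (bounded by $C_0$), the outer face (bounded by $C_9$), and the faces of $G$ strictly between consecutive cycles $C_k$ and $C_{k+1}$ for $k=0,\ldots,8$. Assign each vertex $v\in V(H)$ the initial charge $\deg_H(v)-4$ and each face $f'$ the initial charge $|f'|-4$; by Euler's formula these charges sum to $-8$.

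Next I would rewrite each charge in terms of $c(f,k)$ and $g(f,k)$, using that $\deg_G(v)\ge 3$ on $V(C_0)$ and $\deg_G(v)\ge 4$ on every $V(C_k)$ with $k\ge 1$ (guaranteed by Lemma~\ref{bigO-three-faces-far-apart}, since otherwise such a $v$ would be incident to a face of $F$ at distance at most $9<21$ from $f$), and that every face of $G$ other than $f$ has size at least $4$. Vertices of $V(C_k)$ with $k\le 8$ contribute $c(f,k)$ (with a correction of $-|C_0|$ when $k=0$, since $b_0=3$), while vertices in $V(C_9)$ contribute $|C_8|+n(f,8)-2|C_9|$ (their $H$-degree omits their neighbors outside $C_9$, and the edges across $C_8$--$C_9$ number $|C_8|+n(f,8)$). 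The face $f$ contributes $|C_0|-4$, the outer face contributes $|C_9|-4$, and the intermediate faces contribute $\sum_{k=0}^{8} g(f,k)$. Collecting and using $|C_9|=|C_8|+2n(f,8)+g(f,8)$ reproduces the recurrence identity already observed. Telescoping gives
\[
\sum_{k=0}^{9} n(f,k) = \sum_{k'=0}^{9}(10-k')\,c(f,k') + \sum_{k'=0}^{8}(9-k')\,g(f,k').
\]

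To bound this weighted sum from below I would invoke Lemma~\ref{bigO-no-small-separator}. For each $k\in\{1,\ldots,9\}$, take a $G$-normal connected set $P$ that passes through every vertex of $C_k$ and extends along a short $G$-normal arc through one vertex of each $C_{k'}$ with $k'<k$ into $f$; then $P$ intersects the face $f\in F$, the set $X$ of vertices on $P$ has size at most $|C_k|+k$, and $G-X$ splits into the interior piece (containing $f$) and the exterior piece (containing every other face of $F$, which exists since $\threefaces(G)\ge 3$). Both pieces have $\threefaces\ge 2$ (the interior: via $f$ and the newly-exposed face bounded by parts of $C_{k-1}$; the exterior: via the remaining faces of $F$ and the newly-exposed inner boundary). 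Hence $|C_k|\ge 21-k$. Combined with $|C_k|=|C_0|+\sum_{k'<k}(2n(f,k')+g(f,k'))$, these translate into cumulative lower bounds on partial sums of $n(f,k)$ and $g(f,k)$.

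The main obstacle will be the final optimization step: minimizing the weighted sum above subject to $c(f,k),g(f,k)\ge 0$, to $|C_0|\ge 4$ (triangle-freeness), and to the cascade of layer-size constraints $|C_k|\ge 21-k$ for $k\ge 1$. The extremal configurations push the growth $2n(f,k')+g(f,k')$ as late as possible, where the weights $10-k'$ and $9-k'$ are smallest, and distribute the slack between $n$- and $g$-terms optimally. A careful case analysis across these extremal configurations, separating the near-cylindrical case (small $c$, $g$ supported on a single layer) from the more balanced cases, yields the precise constant $249/8$.
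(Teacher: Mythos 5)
There is a genuine gap, and it is fatal to the approach rather than a fixable detail. Your plan reduces the lemma to a linear optimization: minimize $\sum_{k=0}^{9} n(f,k) = \sum_{k'}(10-k')c(f,k') + \sum_{k'}(9-k')g(f,k')$ subject to $c,g\ge 0$, $|C_0|\ge 4$, and layer-size constraints $|C_k|\ge 21-k$ obtained from Lemma~\ref{bigO-no-small-separator}. But these constraints do not exclude the configuration in which the first ten layers around $f$ form a piece of a cylindrical grid with $|C_0|\ge 21$: there all $c(f,k)=g(f,k)=0$, every $|C_k|=|C_0|\ge 21-k$, and $\sum_k n(f,k)=0$. (Since $|C_{k+1}|=|C_k|+2n(f,k)+g(f,k)\ge |C_k|$, the cascade $|C_k|\ge 21-k$ collapses to essentially the single condition $|C_1|\ge 20$, and even that one is not actually derivable for $k=1$, since the interior piece $H_1$ is then a path from $C_0$ and has $\threefaces(H_1)=1$, so Lemma~\ref{bigO-no-small-separator} does not apply.) No amount of case analysis on this feasible region can produce the constant $249/8$, because the optimum of your program is $0$. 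The missing idea is that the lower bound on $\sum_k n(f,k)$ cannot come from planarity and separator arguments alone; it must come from the hypothesis that $G$ is a counterexample. Grid-like neighborhoods are exactly the ones from which a huge $2$-degenerate set can be harvested (cf.\ Lemma~\ref{bigO-cylindrical-grid}), so small $\sum_k n(f,k)$ has to be refuted by exhibiting such a set.

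That is what the paper does. It sets $X=\bigcup_{k\le 9}V(C_k)$ and, for each $k$, chooses a smallest $R_k\subseteq V(C_k)$ with $\sum_{v\in V(C_k)\setminus R_k}(n(v)-1)\le 1$, so that $|R_k|\le n(f,k)-1$; then it shows that $S\cup(X\setminus R)$ is $2$-degenerate for any $2$-degenerate $S$ in $G-X$ (each $C_k[X']-R_k$ is a union of paths with at most one vertex having two forward neighbors, so some endvertex has degree at most two). Minimality of $G$ plus the counterexample assumption then force $|X\setminus R|<\tfrac{7}{8}|X|$, i.e.\ $8|R|-1\ge |X|$, and Claim~\ref{layer size lower bound} (which itself rests on Claim~\ref{lemma-basiclayers}, another reduction using the counterexample property) gives $|X|\ge 184$; combining with $|R|\le \sum_k n(f,k)-8$ yields $8\sum_k n(f,k)\ge 249$. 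Your telescoped identity for $\sum_k n(f,k)$ is correct and is implicit in the paper, but without the deletion-and-reinsertion argument tying $\sum_k n(f,k)$ to the size of a removable set $R$, the desired inequality cannot be reached.
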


First we need the following claims.
\begin{claim}\label{lemma-basiclayers}
  Every face $f\in F$ satisfies $n(f, 1)\ge 2$.
\end{claim}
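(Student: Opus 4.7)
The plan is to use the identity $n(f,1) = c(f,0) + c(f,1) + g(f,0)$ (an instance of the general identity derived just before the claim) and argue that each summand is a non-negative integer: $c(f,0) \ge 0$ by Lemma~\ref{bigO-min-degree-three}; $c(f,1) \ge 0$ because any $v \in V(C_1)$ satisfies $d(f,v) = 1 \le 9$, so by Lemma~\ref{bigO-three-faces-far-apart} it is incident to no face of $F$ and hence has degree at least $4$; and $g(f,0) \ge 0$ by triangle-freeness of $G$.

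Next I would suppose for contradiction that $n(f,1) \le 1$, which forces $c(f,0) \le 1$, so at most one vertex of $V(C_0)$ has degree greater than $3$. Consider $H^+ := G - V(C_0)$. Removing $V(C_0)$ merges $f$ together with all faces at distance $1$ from $f$ into a single new face of $H^+$ whose boundary walk visits every vertex of $V(C_1)$. The vertices of $H^+$ whose degree has dropped lie in $V(C_1)$ and are covered by this new face; all other low-degree vertices of $H^+$ are already covered by $F \setminus \{f\}$ using Lemma~\ref{bigO-three-faces-far-apart}. Hence $\threefaces(H^+) \le \threefaces(G)$.

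Then I would argue that any induced $2$-degenerate subgraph $G[S]$ of $H^+$ extends to an induced $2$-degenerate subgraph of $G$ of size $|S| + A$, where $A := |V(C_0)|$ when $c(f,0) = 0$ and $A := |V(C_0)| - 1$ when $c(f,0) = 1$ (in the latter case omit the unique degree-$\ge 4$ vertex of $V(C_0)$). Indeed, prepending the $A$ chosen vertices to a $2$-degenerate ordering of $G[S]$ along the cyclic order of $C_0$ leaves each prepended vertex $u$ with at most one later $C_0$-neighbor and at most $\deg(u) - 2 \le 1$ later neighbors in $S$. Taking $S = V(H^+)$, the same extension shows that $H^+$ itself is not $2$-degenerate (else $G$, or $G$ minus one vertex, would be $2$-degenerate, contradicting that $G$ is a counterexample). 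By the minimality of $G$, $\alpha_2(H^+) \ge \tfrac{7}{8}(|V(G)| - |V(C_0)|) - \bigOconstant(\threefaces(G) - 2)$, and combining with the extension gives $\alpha_2(G) \ge \tfrac{7}{8}|V(G)| - \bigOconstant(\threefaces(G) - 2) + A - \tfrac{7}{8}|V(C_0)|$. This contradicts the counterexample hypothesis whenever $A \ge \tfrac{7}{8}|V(C_0)|$, which holds automatically when $c(f,0) = 0$ and also when $c(f,0) = 1$ and $|V(C_0)| \ge 8$.

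The main obstacle is the remaining finite family of configurations with $c(f,0) = 1$ and $|V(C_0)| \in \{4,5,6,7\}$, in which the identity forces $|V(C_1)| = |V(C_0)| + 2$ and a very rigid near-prism structure around $f$. To dispatch these cases I expect either to augment the extension with well-chosen vertices of $V(C_1)$ to boost $A$ past $\tfrac{7}{8}|V(C_0)|$, or to extract a large $2$-degenerate induced subgraph directly from the near-cylindrical-grid configuration via Lemma~\ref{bigO-cylindrical-grid}; in either case the small total number of possibilities permits a bounded case analysis.
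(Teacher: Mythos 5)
The first part of your argument---the identity $n(f,1)=c(f,0)+c(f,1)+g(f,0)$ and the non-negativity of each summand---is fine and matches the paper's setup. The gap is in your extension step. When $c(f,0)=0$ you add \emph{all} of $V(C_0)$ to $S$, and your ordering argument is incorrect: prepending the cyclic order $w_0,w_1,\dots,w_{p-1}$ of $C_0$ leaves $w_0$ with \emph{two} later neighbours on $C_0$ (namely $w_1$ and $w_{p-1}$) plus up to one later neighbour in $S$, i.e.\ three in total. This is not a repairable bookkeeping slip: if $V(C_1)\subseteq S$ (which you cannot exclude, since $C_1$ induces a cycle and $S$ is handed to you by minimality), then $G[S\cup V(C_0)]$ contains the induced prism over $C_0$, in which every vertex has degree three, so it is not $2$-degenerate; the cube---the extremal example for the entire paper---is exactly this configuration with $|V(C_0)|=4$. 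So in the $c(f,0)=0$ branch you must also omit a vertex, giving $A=|V(C_0)|-1$, and then your own arithmetic $A\ge\tfrac{7}{8}|V(C_0)|$ forces $|V(C_0)|\ge 8$. Hence the unresolved cases $|V(C_0)|\in\{4,5,6,7\}$, which you acknowledge only for $c(f,0)=1$, are in fact present in every branch, and $|V(C_0)|=4$ is the generic situation, not a degenerate corner. Your proposed escapes do not close this: Lemma~\ref{bigO-cylindrical-grid} applies to graphs that are globally cylindrical grids, not to a local near-grid around one face, and ``augmenting the extension with well-chosen vertices of $V(C_1)$'' is precisely the nontrivial content that is missing.

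The paper resolves exactly this $\tfrac{7}{8}$-threshold problem by cutting along \emph{two} layers: it deletes $X=V(C_1\cup C_2)$, so that $|X|\ge 8$ holds automatically, and adds back $X\setminus\{v\}$ for a single carefully chosen $v\in V(C_1)$, the choice depending on which of $c(f,0)$, $g(f,0)$, $c(f,1)$ carries the one unit of $n(f,1)$. Then $|X|-1\ge\tfrac{7}{8}|X|$ needs no case split on the length of the boundary cycle of $f$. If you want to salvage a one-layer argument you would need a genuinely new idea for short boundary cycles; as written, the proof does not go through.
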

\begin{proof}
  Suppose that $n(f,1)\le 1$.
  For $k\in \{0,1\}$, let $C_k$ denote the cycle induced by $\{v\in V(G):d(f,v)=k\}$ according to Lemma~\ref{layering-lemma}.
  If $n(f,1)=0$, then $c(f,0)=c(f,1)=0$ and $g(f,0)=0$, i.e., $H=G[V(C_0\cup C_1)]$ is a cylindrical grid
  and all vertices of $C_1$ have degree $4$ in $G$.  In this case, let $v$ be an arbitrary vertex of $C_1$.
  If $n(f,1)=1$, then $c(f,0)+g(f,0)+c(f,1)=1$, so one of the following holds (see Figure \ref{one charge in first layer fig}):
  \begin{itemize}
  \item $c(f, 0) = 1$, so there is a vertex $v'\in V(C_0)$ of degree four and a vertex $v''\in V(C_1)$ of degree two in $H$; we let $v$ be any vertex of $C_1$ that is not $v''$ and is not adjacent to $v'$.  Note that every vertex of $V(C_0)\setminus\{v'\}$ has degree three, and every vertex of $C_1$ has degree four in $G$.  Or,
  \item $g(f, 0) = 1$, so there is a face of $H$ of length five incident with a vertex $v'\in V(C_1)$ of degree two in $H$; we let $v$ be any vertex of $C_1$ other than $v'$.  Note that every vertex of $C_0$ has degree three and every vertex of $C_1$ has degree four in $G$.  Or,
  \item $c(f, 1) = 1$, so $H$ is a cylindrical grid and exactly one vertex of $C_1$ has degree five; we let $v$ be this vertex.
  \end{itemize}
  
  Let $X=V(C_1\cup C_2)$.
  Note that $\threefaces(G-X)\le \threefaces(G)$,
  and by the minimality of $G$, there exists $S\subseteq V(G-X)$ inducing a $2$-degenerate subgraph such that
  $|S|\ge \frac{7}{8}|V(G-X)| - \bigOconstant\left(\threefaces(G-X) - 2\right)\ge \frac{7}{8}|V(G)| - \bigOconstant\left(\threefaces(G) - 2\right)-(|X|-1)$.
  But then $S\cup (X\setminus\{v\})$ induces a $2$-degenerate subgraph of $G$, contradicting the assumption that $G$ is a counterexample.
\end{proof}

\begin{figure}
  \centering
  \begin{minipage}[b]{.33\linewidth}
  \centering
  \begin{tikzpicture}
    \node[draw=none, fill=none] at (0, 0) {$f$};
    \tikzstyle{every node}=[draw, fill, circle, scale = .5];
    \node[label=135:{\huge$v'$}] (1) at (135 : 1) {};
    \node (2) at (-135 : 1) {};
    \node (2') at (-135 : 2) {};
    \draw (2) -- (2') -- ($(2') + (-135 : .5)$);
    \node (3) at (-45 : 1) {};
    \node (3') at (-45 : 2) {};
    \draw (3) -- (3') -- ($(3') + (-45 : .5)$);
    \node (4) at (45 : 1) {};
    \node[label=90:{\huge$v$}] (4') at (45 : 2) {};
    \draw (4) -- (4') -- ($(4') + (45 : .5)$);
    \draw (1) -- (2) -- (3) -- (4) -- (1);
    \node (1a) at (2' |- 1) {};
    \node (1b) at (1 |- 4') {};
    \draw (1) -- (1a) -- ($(1a) + (180: .5)$); \draw (1) -- (1b) -- ($(1b) + (90: .5)$);
    \node[label=135:{\huge$v''$}] (1') at (1a |- 1b) {};
    \draw ($(1') + (90 : .5)$) -- (1') -- ($(1') + (180: .5)$);
    
    \draw (1') -- (1a) -- (2') -- (3') -- (4') -- (1b) -- (1');
  \end{tikzpicture}\\$c(f, 0) = 1$%
\end{minipage}%
\begin{minipage}[b]{.33\linewidth}
  \centering
  \begin{tikzpicture}
    \node[draw=none, fill=none] at (0, 0) {$f$};
    \tikzstyle{every node}=[draw, fill, circle, scale = .5];
    \node (1) at (135 : 1) {};
    \node (2) at (-135 : 1) {};
    \node (2') at (-135 : 2) {};
    \draw (2) -- (2') -- ($(2') + (-135 : .5)$);
    \node (3) at (-45 : 1) {};
    \node (3') at (-45 : 2) {};
    \draw (3) -- (3') -- ($(3') + (-45 : .5)$);
    \node (4) at (45 : 1) {};
    \node[label=90:{\huge$v$}] (4') at (45 : 2) {};
    \draw (4) -- (4') -- ($(4') + (45 : .5)$);
    \draw (1) -- (2) -- (3) -- (4) -- (1);
    \node (1') at (135 : 2) {};
    \draw (1) -- (1') -- ($(1') + (135 : .5)$) {};
    \node[label=90:{\huge$v'$}] (v') at ($(1') !.5! (4')$) {};
    \draw ($(v') + (135 : .5)$) -- (v') -- ($(v') + (45 : .5)$);
    \draw (1') -- (2') -- (3') -- (4') -- (1');
  \end{tikzpicture}\\$g(f, 0) = 1$%
\end{minipage}%
\begin{minipage}[b]{.33\linewidth}
  \centering\begin{tikzpicture}
    \node[draw=none, fill=none] at (0, 0) {$f$};
    \tikzstyle{every node}=[draw, fill, circle, scale = .5];
    \node (1) at (135 : 1) {};
    \node (2) at (-135 : 1) {};
    \node (2') at (-135 : 2) {};
    \draw (2) -- (2') -- ($(2') + (-135 : .5)$);
    \node (3) at (-45 : 1) {};
    \node (3') at (-45 : 2) {};
    \draw (3) -- (3') -- ($(3') + (-45 : .5)$);
    \node (4) at (45 : 1) {};
    \node[label=45:{\huge$v$}] (4') at (45 : 2) {};
    \draw (4) -- (4');
    \draw ($(4') + (90 : .5)$) -- (4') -- ($(4') + (0 : .5)$);
    \draw (1) -- (2) -- (3) -- (4) -- (1);
    \node (1') at (135 : 2) {};
    \draw (1) -- (1') -- ($(1') + (135 : .5)$) {};
    \draw (1') -- (2') -- (3') -- (4') -- (1');
  \end{tikzpicture}\\$c(f, 1) = 1$%
\end{minipage}
  \caption{$n(f, 1) = 1$, when $|f| = 4$.}
  \label{one charge in first layer fig}
\end{figure}
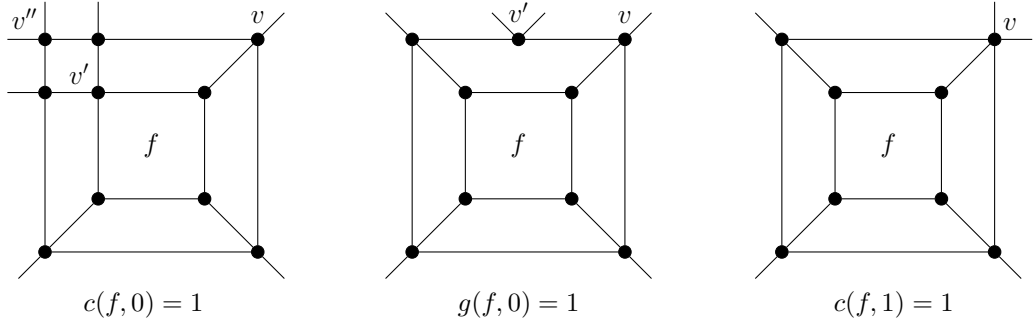

\begin{claim}\label{layer size lower bound}
  Let $f\in F$ and for $k\in\{0, \dots, 9\}$, let $C_k$ be the cycle induced by $\{v\in V(G) : d(f, v) = k\}$ according to Lemma~\ref{layering-lemma}.  Then
  \begin{equation*}
  |C_0\cup \ldots\cup C_{9}|\ge 184.
\end{equation*}
\end{claim}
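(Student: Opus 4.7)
The plan is to use the recurrence $|C_{k+1}| = |C_k| + 2n(f,k) + g(f,k)$ stated just before the claim, together with Claim~\ref{lemma-basiclayers}, to lower-bound each $|C_k|$ and then sum. I will first observe that $|C_0| \geq 4$, since by Lemma~\ref{layering-lemma} $C_0$ induces a cycle in the triangle-free graph $G$.

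The key step is to show that $n(f,k) \geq 2$ for every $k \in \{1,\ldots,8\}$. I will argue that $n(f,\cdot)$ is non-decreasing by using the identity $n(f,k) = n(f,k-1) + g(f,k-1) + c(f,k)$. The term $g(f,k-1)$ is non-negative because $G$ is triangle-free, so every face has length at least $4$. The term $c(f,k)$ is non-negative provided every vertex of $C_k$ has degree at least $b_k = 4$ whenever $k \geq 1$; this is the main technical point, and I would establish it as follows: a vertex of degree at most $3$ must, by the definition of $F$, be incident with some face of $F$, but any face of $F$ other than $f$ is at distance $\geq 21$ from $f$ by Lemma~\ref{bigO-three-faces-far-apart}, while a vertex of $C_k$ is at distance $k \leq 9$ from $f$. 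Hence the face must be $f$ itself and the vertex lies in $C_0$, not $C_k$. With non-decreasingness established, Claim~\ref{lemma-basiclayers} gives $n(f,k) \geq n(f,1) \geq 2$ for all $k \in \{1,\ldots,8\}$.

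The recurrence now yields $|C_{k+1}| - |C_k| \geq 2 n(f,k) \geq 4$ for each $k \in \{1,\ldots,8\}$, and trivially $|C_1| \geq |C_0| \geq 4$. Induction then gives $|C_k| \geq 4k$ for each $k \in \{1,\ldots,9\}$, and summing,
\begin{equation*}
  |C_0 \cup \cdots \cup C_9| \;=\; \sum_{k=0}^{9} |C_k| \;\geq\; 4 + \sum_{k=1}^{9} 4k \;=\; 4 + 180 \;=\; 184,
\end{equation*}
as required. The only real obstacle is the degree bound underlying $c(f,k) \geq 0$; everything else is bookkeeping with the recurrences already set up in the paper.
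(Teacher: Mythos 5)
Your proposal is correct and follows essentially the same route as the paper: $|C_0|\ge 4$ from triangle-freeness, $n(f,k)\ge 2$ for $k\ge 1$ via Claim~\ref{lemma-basiclayers} together with the monotonicity of $n(f,\cdot)$, and then the recurrence $|C_{k+1}|\ge |C_k|+4$ summed over the ten layers. The only difference is that you spell out the justification for monotonicity (nonnegativity of $g(f,k)$ and of $c(f,k)$ for $k\ge 1$ via Lemma~\ref{bigO-three-faces-far-apart}), which the paper leaves implicit here and invokes in the surrounding text; your degree argument matches the one the paper uses in Lemma~\ref{layering-lemma}.
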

\begin{proof}
  Claim~\ref{lemma-basiclayers} implies that $n(f,k)\ge 2$ for $k\in\{1,\ldots,9\}$,
  and thus $|C_{k+1}|\ge |C_k|+4$ for $k\in \{1,\ldots,8\}$.  Since $G$ is triangle-free, we have $|C_0|\ge 4$, and
  we conclude that
  \begin{equation*}
    |C_0\cup\ldots\cup C_{9}| \geq   10|C_0|+4(1+2+\ldots+8) \geq 184,
  \end{equation*}
  as desired.
\end{proof}

Now we can prove Lemma~\ref{lemma-degeneq}.
\begin{proof}[Proof of Lemma \ref{lemma-degeneq}]
  For $k\in\{0,\ldots,9\}$, let $C_k$ denote the cycle induced by $\{v\in V(G):d(f,v)=k\}$ according to Lemma~\ref{layering-lemma}.
Let $X=\bigcup_{k=0}^{9} V(C_k)$.  By Claim~\ref{layer size lower bound}, $|X| \geq 184$.
For $k\ge 1$, let $R_k$ be a smallest subset of $V(C_k)$ such that $\sum_{v\in V(C_k)\setminus R_k} (n(v) - 1)\le 1$.
By Claim~\ref{lemma-basiclayers} and the monotonicity of $n(f,k)$, we have $n(f,k)\ge 2$, and thus $R_k$ is non-empty.
Note that $|R_k|\le n(f,k)-1$.  For $k=0$, let $R_0$ be defined in the same way if $n(f,0)\ge 2$, and let $R_0$ consist
of an arbitrary vertex of $C_0$ otherwise.  Let $R=\bigcup_{k=0}^{9} R_k$.

By Lemma~\ref{bigO-three-faces-far-apart}, we have $\threefaces(G-X)\le\threefaces(G)$, and by the minimality of $G$,
there exists $S\subseteq V(G-X)$ inducing a $2$-degenerate subgraph such that
$|S|\ge \frac{7}{8}|V(G-X)| - \bigOconstant\left(\threefaces(G-X) - 2\right)\ge \frac{7}{8}|V(G)| - \bigOconstant\left(\threefaces(G) - 2\right)-\frac{7}{8}|X|$.
We claim that $S\cup (X\setminus R)$ induces a $2$-degenerate subgraph of $G$.  Indeed, it suffices to show that for every non-empty
$X'\subseteq X\setminus R$, the graph $G[S\cup X']$ has a vertex of degree two.  Let $k$ be the minimum index such that $X'\cap V(C_k)\neq\emptyset$.
Note that by the choice of $R_k$, $C_k[X']-R_k$ is a union of paths containing at most one vertex with more than one neighbor in $C_{k+1}$,
and if there is such a vertex, it has exactly two neighbors in $C_{k+1}$.
Consequently, one of the endvertices of these paths
has degree at most two in $G[S\cup X']$.

Since $G$ is a counterexample, we conclude that $|X\setminus R|<\frac{7}{8}|X|$, and thus $8|R|-1 \geq |X| \geq 184$.
Since $|R|\le 2+\sum_{k=0}^{9} (n(f,k)-1) \leq -8 + \sum_{k=0}^{9}n(f, k)$, the inequality
$$
8\sum_{k=0}^{9}n(f, k)\ge 249
$$ follows.
\end{proof}


\subsection{Discharging}

In this subsection we use discharging to complete the proof of Theorem \ref{real-bigO-thm}.
\begin{proof}[Proof of Theorem \ref{real-bigO-thm}]
For each $v\in V(G)$, let $\initch (v) = \deg(v) - 4$, and for each face $f$ of $G$, let $\initch(f) = |f| - 4$.  Now we redistribute the charges according to the following rules and denote the final charge by $\finalch$.

\begin{enumerate}
\item Every face $f\in F$ sends 1 unit of charge to every vertex incident with $f$.
\item Afterwards, every face $f' \notin F$ and every vertex $v\in V(G)$ such that $d(f,f')\le 9$ or $d(f,v)\le 9$
for some face $f\in F$ sends all of its charge to $f$.
\end{enumerate}

Observe that every vertex and every face not in $F$ sends its charge to at most one face of $F$ by Lemma \ref{bigO-three-faces-far-apart}.
Clearly, all vertices and all faces not in $F$ have non-negative final charge.  By Euler's formula the sum of the charges is $-8$, so there exists some face $f\in F$ with negative charge.

By Lemma \ref{layering-lemma}, for each $k\in\{0, \dots, 9\}$, the vertices $v\in V(G)$ such that $d(f,v)=k$ induce a cycle in $G$, say $C_k$.
Note that after the first discharging rule is applied, $f$ has charge $-4$, and since $\finalch(f)\le -1$, at most three units of charge
are sent to $f$ according to the second rule.  Note that $f$ receives precisely $c(f, k)$ total charge from vertices of $C_k$ and
precisely $g(f, k)$ total charge from faces between $C_k$ and $C_{k+1}$.  Hence, we have
$$3\ge \sum_{k'=0}^{9} c(f,k')+\sum_{k'=0}^{8} g(f,k')\ge n(f,k)$$
for every $k\in\{0,\ldots,9\}$.  Therefore,
\begin{equation*}
  \sum_{k=0}^{9} n(f,k)\le 30.
\end{equation*}
However, this contradicts Lemma~\ref{lemma-degeneq}, finishing the proof.
\end{proof}

Let us remark that the constant $18$ in the statement of Theorem~\ref{real-bigO-thm} can be improved.  In particular, one could extend the case
analysis of Claim~\ref{lemma-basiclayers} to fully describe larger neighborhoods of the face, likely obtaining enough charge 
in a much smaller number of layers than $10$ needed in our argument (at the expense of making the proof somewhat longer and harder to read).



\bibliographystyle{plain}
\bibliography{planar-degen-subgraphs}

\end{document}